\numberwithin{equation}{section}
\theoremstyle{definition}
\newtheorem{definition}{Definition}[section]
\theoremstyle{plain}
\newtheorem{theorem}[definition]{Theorem}
\newtheorem{proposition}{Proposition}
\theoremstyle{remark}
\newtheorem{remark}{Remark}
\date{}
\begin{document}  

\author{
\large{\bf{Donatella Donatelli}} \\[1ex] 
\normalsize Department of Information Engineering, Computer Science and Mathematics \\ 
\normalsize University of L'Aquila \\
\normalsize 67100 L’Aquila, Italy. \\
\normalsize \href{mailto:donatella.donatelli@univaq.it}{donatella.donatelli@univaq.it}
\and 
\large{\bf{Pierangelo Marcati}} \\[1ex] 
\normalsize GSSI - Gran Sasso Science Institute \\ 
\normalsize Viale F.\,Crispi, 7 \\
\normalsize 67100 L’Aquila, Italy.\\
\normalsize \href{mailto:pierangelo.marcati@univaq.it}{pierangelo.marcati@univaq.it}, \href{mailto:pierangelo.marcati@gssi.infn.it}{pierangelo.marcati@gssi.infn.it}
\and
\large{\bf{Licia Romagnoli}} \\[1ex] 
\normalsize Department of Information Engineering, Computer Science and Mathematics \\ 
\normalsize University of L'Aquila \\
\normalsize 67100 L’Aquila, Italy.\\
\normalsize \href{mailto:licia.romagnoli@graduate.univaq.it}{licia.romagnoli@graduate.univaq.it}
}

\title{\normalfont{Analysis of solutions for a cerebrospinal fluid model}} 
\maketitle

\begin{abstract}
The aim of this manuscript is to analyze an intracranic fluid model from a mathematical point of view. By means of an iterative process we are able to prove the existence and uniqueness of a local solution and  the existence and uniqueness of a global solution under some restriction conditions on the initial data. Moreover the last part of the paper is devoted  to present numerical simulations for the analyzed cerebrospinal model. In particular, in order to assess the reliability of the  stated  theoretical results, we carry out the numerical simulations in two different cases: first, we fix initial data which  \,satisfy the conditions for the global existence of solutions, then, we choose initial data that violate them.
\end{abstract}

\section{Introduction}

The main purpose of the present paper is to analyze an intracranic fluid model from a mathematical point of view. One of the main difficulties is related to the complexity of the intracranial dynamics which is origin of many different phenomena: the flow of the cerebrospinal fluid throughout the CSF compartments, the mechanical interaction between the fluid and the brain, the physiology of the brain, coupling with the circulatory-system and production and reabsorption laws.\\
There is a lack in literature of rigorous mathematical results regarding the analysis of the solutions to the systems of equations which model the different mechanisms in the intracranic pattern. Understanding the mathematical properties of the model is often intimately related to establish its validity in the analysis of the physiological properties.
Our aim here is then to provide a comprehensive study of the equations that rule the cerebrospinal fluid (CSF) dynamics.\\
The physiological process we are interested in originates in the choroid plexus (see Fig. \ref{fig1}), affected by the cardiac cycle, that produces CSF at a constant rate (approximately 500 ml per day): the systole induces an expansion of the choroid that acts like a driving force for the CSF motion. Once secreted the CSF flows through four ventricles linked by different foramina and finally it reaches the subarachnoidal space, that is connected to the fourth ventricle by the foramina of Luschke and Magendie (see Fig. \ref{fig2}). At this point CSF is absorbed across the arachnoid villi into the venous circulation and a significant amount probably also drains into lymphatic vessels around the cranial cavity and spinal canal. The arachnoid villi act as one-way valves between the subarachnoid space and the dural sinuses. 
\begin{figure}[h]\label{fig1}
\centering
\includegraphics[scale=0.45]{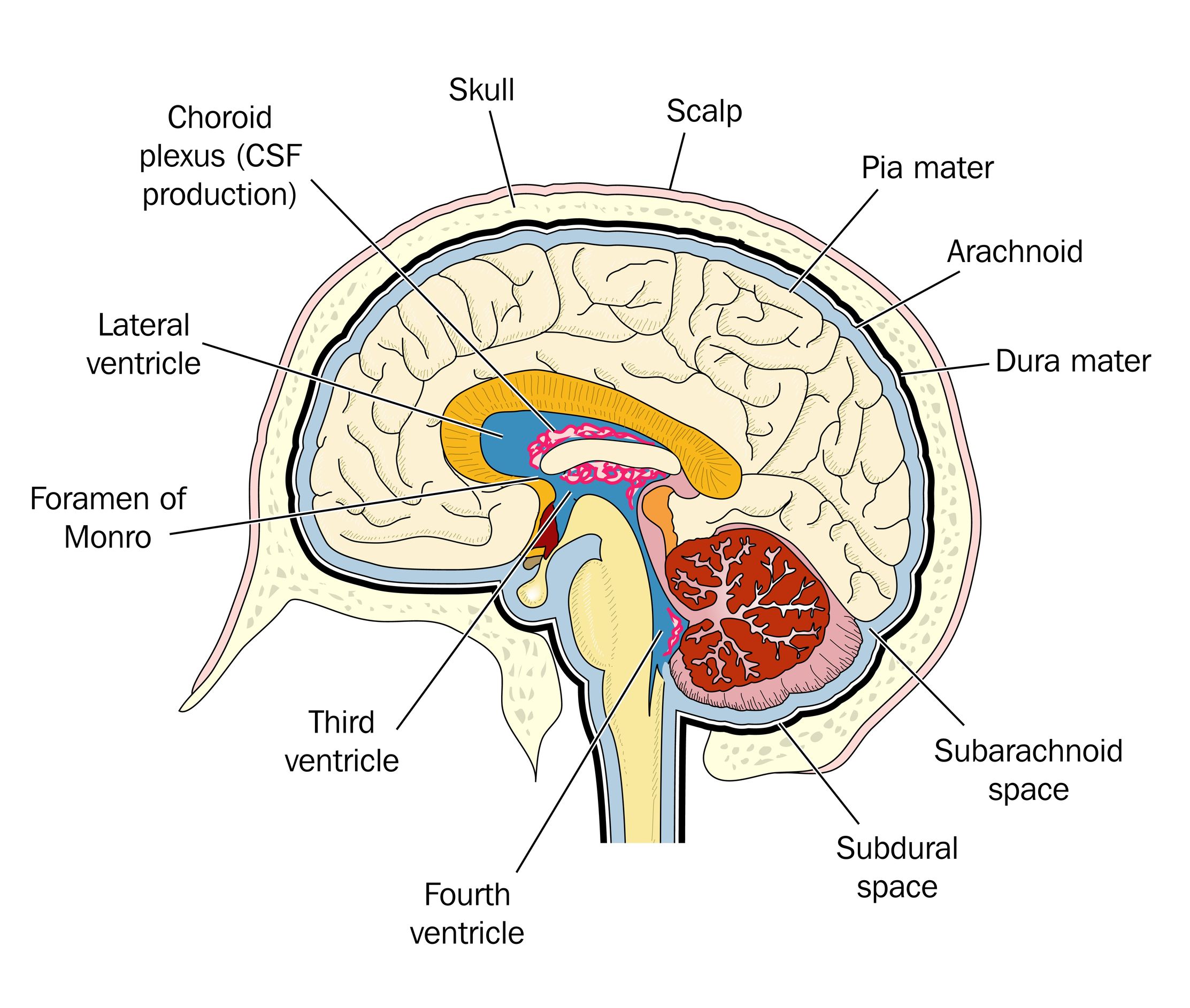}
\caption{\small{Cross section of the human brain.}}
\end{figure}

In order to make the cerebral dynamics more comprehensible, detailed mathematical models quantifying forces and their interaction have become fundamental to reveal what medical instruments are not able to do without affecting the data.\\
In fact since the mid seventies, the mathematical modeling of the intracranial dynamics has gained interest and many researchers have developed and analyzed models of different complexity for each of the three compartments (brain parenchyma, cerebral vasculature and cerebrospinal fluid), either standing alone or coupled. An extensive overview of the models for intracranial dynamics may be found in \cite{b2}, \cite{b1}. \\
More precisely, in order to link the mathematical models for the intracranial dynamics it is possible to identify two classes: models based on partial differential equations and models based on a lumped compartmental description of the intracranial constituents.

Linninger et al.\,(see \cite{b3}) developed a lumped model for describing the CSF pulsatility during the cardiac cycle. This first model included the main constituents present in the cranial vault, however only the CSF compartment was fully treated. In fact the model includes the lateral ventricles, the third ventricle, the fourth ventricle and the subarachnoid space. 
\begin{figure}[h]\label{fig2}
\centering
\includegraphics[scale=0.4]{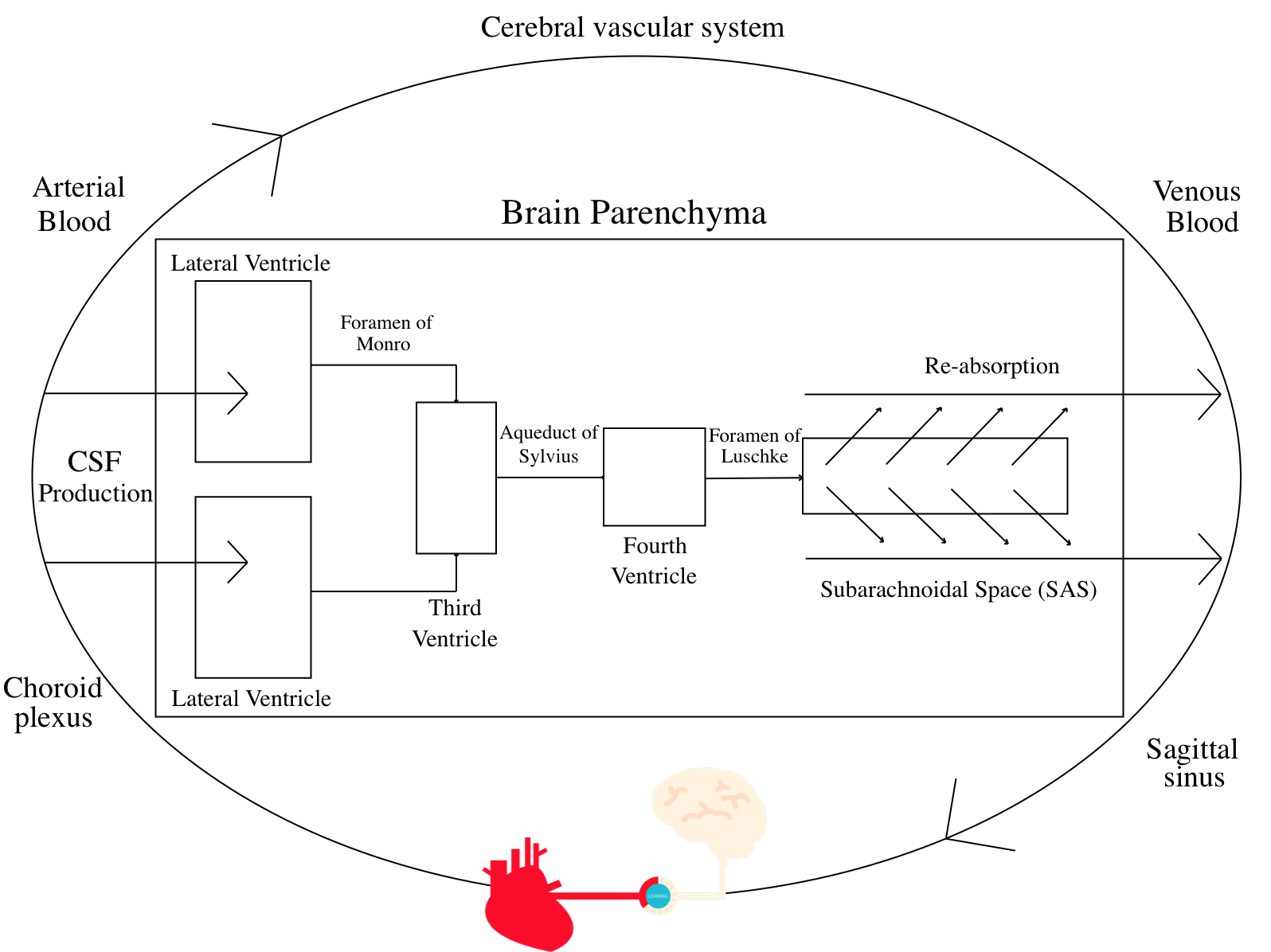}
\caption{\small{Scheme of the intracranic interacting systems: the CSF environments, the brain parenchyma and the vascular system.}}
\end{figure}
The cerebral blood compartment is taken into account only as boundary condition for the CSF system. Indeed, the CSF pulsation is driven by a boundary condition modeling the effects of the choroid plexus which expands and contracts in a prescribed way (modeling the effect of the arterial pressure wave) and the CSF reabsorption is modeled by an equation where the venous pressure is a prescribed constant. The brain parenchyma is also treated in a simplified way. In fact equations to link the CSF compartments pressure and volume are obtained by supposing that each compartment is a cylinder with a thin shell membrane that deforms radially under the pressure difference between the CSF and the parenchyma. 

In this paper we adopt a different approach to the study of the cerebrospinal fluid dynamics, in fact we start from the Linninger model by retracing and simplifying the more important steps of the modeling and we provide a comprehensive analysis of the system of equations that represents the pillar of the aforementioned dynamics, in order to fill up the gap between the mathematical theory and modelization.\\

First of all we clarify the system of equations that describes the CSF hydrodynamics and we obtain the following system
\begin{eqnarray}\label{ar1}
\begin{cases}
\partial_t\eta(t,z)+\partial_t a(t)+u(t,z)-\tilde{Q}_p=0, \\
\alpha\partial_{tt}\eta(t,z)+\tilde{k}\partial_t\eta(t,z)+\kappa\eta(t,z)-AP(t,z)+A\tilde{P}=0,  \\
\rho\partial_t u(t,z)+\rho u(t,z)\partial_z u(t,z)+\partial_z P(t,z)+\beta u(t,z)=0,  
\end{cases}
\end{eqnarray}
where $u$ is the CSF velocity flux, $\eta$ is the tissue displacement and $P$ is the pressure. In this first approach we neglect the variation of the cross section $A$ due to the choroid plexus expansion that drives the pulsatile CSF circulation, then we consider it constant in order to study the effects of this statement on the real cerebral physiology. Moreover we choose properly the initial and boundary conditions with the related compatibility conditions that are all treated in detail in Section \ref{paragr1}. \\
Then we proceed by proving the local existence and uniqueness of a solution to the system \eqref{ar1}. This result is achieved by setting up a proper iterative scheme based on the approximating system strictly related to \eqref{ar1}. 
As a first step we prove by induction on the number $n$ of iterations the existence and uniqueness of a solution, $\mathcal{X}^n(t,z)=(u^n, \eta^n, P^n)(t,z)$, to the approximating system, then by using higher energy estimates ($\mathcal{H}^s, s>\frac{9}{2}$) we prove the convergence of the approximating sequence, $\mathcal{X}^n(t,z)$, to a local classical solution of \eqref{ar1}.

This first result allows us to investigate the global existence of a solution for the system \eqref{ar1}. Since $\eta(t,z)$ and $P(t,z)$ inherit the lifespan of $u(t,z)$, we focus on the third equation of \eqref{ar1} that we study first in the homogeneous form in order to obtain a detailed framework. We apply the characteristic method and we get in both cases a Riccati equation that we are able to solve with the standard ODE methods for the homogeneous case and by means of a particular solution that we construct properly for the nonhomogeneous case.\\
Finally, by using a sharp continuation principle (see \cite{majda}) we show that there exists a global solution to \eqref{ar1} if and only if it is satisfied a precise condition on the initial datum of $u$.

The global existence of a solution paves the way for another important result due mainly to the coupling of the CSF flux and the cardiac cycle. It is well known that systole and diastole produce a periodic cardiac movement that holds a big role in the intracranic dynamics and this suggests the existence of particular solutions for the system \eqref{ar1} if we consider it independent of the spatial variable $z$. Indeed we are able to prove the existence and uniqueness of periodic solutions that are strongly influenced by the forcing term $a(t)$ and whose stability is ensured by the global existence time.\\

The present paper is organized as follows. In section 2 we begin a detailed discussion with a description of the Linninger model and, once we have reformulated it in a more simple way, we fix the initial and boundary conditions. In section 3 we state our main results on the existence and uniquess of a local solution and on the existence and uniqueness of a global solution to the system of equations \eqref{ar1}. In section 4, we set up an iterative process by means of an approximating system associated to the \eqref{ar1}. In section 5, in order to discuss the convergenge of the iterative scheme we derive higher order energy estimates and we prove the convergence for each one of the sequences that appear in the approximating system. Then we conclude the proof of the local existence and uniqueness of a solution to \eqref{ar1} in section 6.
Section 7 is devoted to the problem of the global existence of solutions. Since the third equation of the system \eqref{ar1} has a ``Burgers'' like behaviour we focus first on that equation and then we deduce the global existence for the remaining equations. Hence we analyze in detail the global existence time in order to adopt a continuation principle that is needed for the proof of the second theorem stated in section 3. In section 8 we analyze the periodic system associated to \eqref{ar1} and we study the stability of its solutions. Finally, the last section of this paper is devoted to present numerical simulations for the analyzed cerebrospinal model. In particular, in order to assess the reliability of the results stated in section 3, we carry out the numerical simulations in two different cases: in the numerical method we fix first initial data which satisfy the conditions for the global existence of a solution to the system \eqref{ar1}, then we choose initial data that violate them. In the first case no blow up of the solutions is displayed and we can observe a parabolic behavior for the velocity profile and, as a consequence, a smooth evolution for the pressure and the tissue displacement. While, in the second case, simulations achieve blow up after a single iteration. These results allow us to validate the statements obtained in the present paper. 
\vspace{8pt}\\
\textbf{Notations.}\ \ We need to define the functional spaces that are necessary to study our problem.\\
Let $\Omega \subset\mathbb{R}$ be an open bounded set. For every $s\in\mathbb{N}$ we define:
\begin{displaymath}
C^0(\bar{\Omega})=\{f:\bar{\Omega}\rightarrow\mathbb{R}\, \lvert \, f\, \mbox{is continuous}\},
\end{displaymath}
\begin{displaymath}
C^s(\bar{\Omega})=\{f:\bar{\Omega}\rightarrow\mathbb{R}\,\lvert \ \ f \in C^0(\bar{\Omega})\ \ \mbox{and}\ \ \partial^{\alpha}f\in C^0(\bar{\Omega})\ \ \mbox{for any}\ \ \alpha\ \ \mbox{with}\ \ \alpha\leq s\},
\end{displaymath}
\begin{displaymath}
\mathcal{H}^s(\Omega)=\{v\in\mathcal{D}'(\Omega)\,\lvert \ \ \partial^{\alpha}v\in L^2(\Omega) \ \ \mbox{for all} \ \ \alpha\leq s\},
\end{displaymath}
where $\mathcal{D}'(\Omega)$ is the space of distributions in $\Omega$.\\
We will denote by $\left\|\cdot\right\|_{L^{\infty}_t\mathcal{H}^s_z}$ the norm $\left\|\cdot\right\|_{L^{\infty}((0,T))\mathcal{H}^{s}([0,L])}$ and by $\left\|\cdot\right\|_2$ the norm $\left\|\cdot\right\|_{L^2([0,L])}$\\
Moreover, we recall the Sobolev interpolation theorem by which, if $0\leq r'\leq r$, there exists a costant $C_r$ such that 
\begin{equation}\label{ar2}
\left\| w\right\|_{\mathcal{H}^{r'}([0,L])}\leq C_r\left\| w\right\|_{L^{2}([0,L])}^{1-\frac{r'}{r}}\left\| w\right\|_{\mathcal{H}^{r}([0,L])}^{\frac{r'}{r}} \ \ \mbox{for any}\ \ w\in\mathcal{H}^r([0,L]).
\end{equation}

\section{Mathematical model and governing equations}\label{paragr1}

In this paper we analyze the model introduced in \cite{b3} which simplifies the intracranic dynamics by representing the fluid-structure interactions that originate in the craniospinal pattern and by quantifying the cerebrospinal fluid (CSF) motion such as the variations of the pressure due to the cerebral tissue compression. In order to do that, the ventricles are discretized into cylindrical finite volume with perfect axial dispersion and radial \,expandability, the foramina that link the different compartments are treated as elastic tubes and the CSF flow is considered basically laminar.\\

The continuity equation of the cerebrospinal fluid flow in the ventricles and the axial momentum equation are given by
\begin{eqnarray}\label{ar3}
\begin{cases}
\displaystyle{\frac{\partial\{A[h+a(t)+\eta(t,z)]\}}{\partial t}}= Q_{p}-Q_f(t,z),\\% \ \ i\in\{\mbox{LV}_1\mbox{-LV}_4\},\\ 
\rho\left[\displaystyle{\frac{\partial u(t,z)}{\partial t}}+u(t,z)\displaystyle{\frac{\partial u(t,z)}{\partial z}}\right]+\displaystyle{\frac{\partial P(t,z)}{\partial z}}= -\displaystyle{\frac{8\mu}{r^2}}u(t,z), %\ \ j\in\{\mbox{FM, AS, FL}\},
\end{cases}
\end{eqnarray}
where
\begin{itemize}
\item $A$ and $h$ are the cross section and the height of the ventricular or subarachnoid section respectively;
\item $r$ is the radius of the foramina and aqueduct;
\item $\mu$ is the fluid viscosity;
\item $a(t)=\bar{\alpha}\left(1.3+\sin\left(\omega t-\frac{\pi}{2}\right)-\frac{1}{2}\cos\left(2\omega t-\frac{\pi}{2}\right)\right)$ is the forcing function;
\item $\eta(t,z)$ denotes the tissue displacement in a section;
\item $Q_{p}$ is the CSF production rate in the choroid plexus;
\item $Q_f(t,z)=Au(t,z)$ is the CSF flow rate leaving ventricle, i.e., flow in foramina and aqueduct;
\item $u(t,z)$ is the axial CSF flow velocity.
\end{itemize}
\begin{figure}
\centering
\includegraphics[scale=0.5]{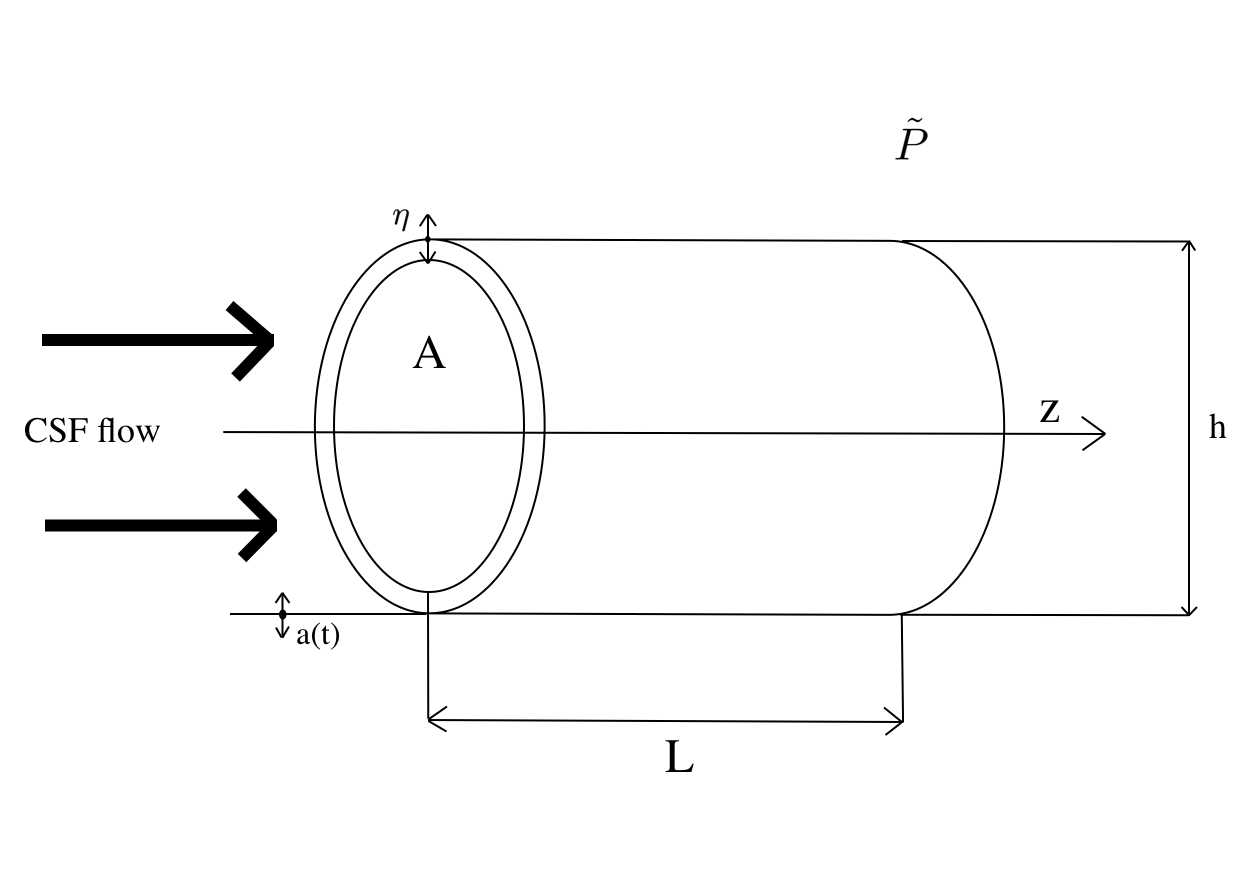}\label{fig3}
\caption{\small{Single compartment section of the CSF dynamics discretized model.}}
\end{figure}
The dynamics of parenchyma stresses, strains and displacements can be described with the laws of elastodynamics,
\begin{equation}\label{ar4}
(\rho_{\mathit{w}}A\delta)\ddot{\eta}(t,z)+k_d\dot{\eta}(t,z)+k_e\eta(t,z)-A[P(t,z)-\tilde{P}]=0, 
\end{equation}
where
\begin{itemize}
\item $k_d\dot{\eta}(t,z)$ is the dissipative force;
\item $k_e\eta(t,z)$ is the elastic force;
\item $P(t,z)-\tilde{P}$ is the difference between fluid and parenchyma pressure.
\end{itemize}
Since the deformation of the tissue directly affects the space available to the fluid, the two systems are fully coupled.\\
This model allows us to compute the pressure $P(t,z)$ in the cerebral ventricles, the velocity $u(t,z)$ of the fluid along the foramina and the tissue displacements $\eta(t,z)$ at each point.\\
The simplified equations \eqref{ar3}, \eqref{ar4} of motion for the hydrodynamics of the 1D intracranial dynamics can be written together in the following system:
\begin{eqnarray}\label{ar5}
\begin{cases}
\partial_t\eta(t,z)+\partial_t a(t)+u(t,z)-\tilde{Q}_p=0, \\
\alpha\partial_{tt}\eta(t,z)+\tilde{k}\partial_t\eta(t,z)+\kappa\eta(t,z)-AP(t,z)+A\tilde{P}=0,  \\
\rho\partial_t u(t,z)+\rho u(t,z)\partial_z u(t,z)+\partial_z P(t,z)+\beta u(t,z)=0,  
\end{cases}
\end{eqnarray}
where $t\in[0,T]$, $z\in[0,L]$, $a(t)\in C^{\infty}([0,T])$, $\tilde{Q}_p=\displaystyle{\frac{Q_p}{A}}$, $\rho, \beta, \tilde{k}, \kappa, \alpha, A, \tilde{P}\in\mathbb{R}$ are physical constants and known values.\\
First of all, for the unknown $u$ we impose the following initial condition  
\begin{equation}\label{ar6}
u(0,z)=u_0(z)=f(z)\,\in\,\mathcal{H}^{s}([0, L]),
\end{equation}
where $\displaystyle{s>\frac{9}{2}}$. \\
Since this paper is a first attempt to perform a rigorous mathematical \,analysis on the system \eqref{ar5} we start considering simplified boundary conditions neglecting some assumptions imposed by the realistic model. Hence for the single compartment of length $L$, we consider the following boundary conditions for $u(t,z)$:
\begin{equation}\label{ar7}
u(t,0)=u(t,L)=0,
\end{equation}
and the following compatibility conditions
\begin{align}\label{ar8}
u(0,0)&=u_0(0)=f(0), \notag \\
u(0,L)&=u_0(L)=f(L);
\end{align}
moreover we assume for the sake of simplicity
\begin{equation}\label{ar9}
f(0)=f(L)=0.
\end{equation}
Similarly for the unknown $\eta$ of the system \eqref{ar5}, we assume
\begin{equation}\label{ar10}
\eta(0,z)=\eta_0(z)=g(z)\in\mathcal{H}^{s}([0,L])
\end{equation}
and the following compatibility conditions
\begin{align}\label{ar11}
\eta(0,0)&=\eta_0(0)=g(0),  \notag \\
\eta(0,L)&=\eta_0(L)=g(L).
\end{align}
Now we want to compute the initial and boundary conditions for the pressure, $P$.
With the previous assumptions and by using the first two equations of our system we find that
\begin{align}\label{ar12}
\partial_t\eta(0,0)&=\omega\bar{\alpha}+\tilde{Q}_p, \notag \\
\partial_{tt}\eta(0,0)&=-\bar{\alpha}\omega^2, 
\end{align}
hence
\begin{equation}\label{ar13}
P(0,0)=s(0)=\tilde{P}-\frac{\bar{\alpha}\alpha\omega^2}{A}+\frac{\tilde{k}\bar{\alpha}\omega}{A}+\frac{\tilde{k}\tilde{Q}_p}{A}+\frac{\kappa}{A} g(0).
\end{equation}
In the same way we get
\begin{align}\label{ar14}
\partial_t\eta(0,z)&=\omega\bar{\alpha}+\tilde{Q}_p-f(z),\notag \\
\partial_t u(0,z)&=-f(z)\partial_z f(z)-\frac{1}{\rho}\partial_z P(0,z)-\frac{\beta}{\rho}f(z), \notag \\
\partial_{tt}\eta(0,z)&=-\bar{\alpha}\omega^2+f(z)\partial_z f(z)+\frac{1}{\rho}\partial_z P(0,z)+\frac{\beta}{\rho}f(z). 
\end{align}
By using the notation
\begin{equation}\label{ar15}
P(0,z)=s(z),
\end{equation}
and by replacing everything in the second equation of \eqref{ar5} we get
\begin{equation}\label{ar16}
s'(z)-\frac{A\rho}{\alpha}s(z)=h(z),
\end{equation}
where
\begin{align}\label{ar17}
h(z)=\rho\bar{\alpha}\omega^2-\rho f(z)\partial_z f(z)&-\beta f(z)-\frac{\rho}{\alpha}\bigg[\bar{\alpha}\tilde{k}\omega+\tilde{k}\tilde{Q}_p\notag \\
&-\tilde{k}f(z)+\kappa g(z)+A\tilde{P}\bigg].
\end{align}
The solution of the ODE \eqref{ar16} with the initial condition \eqref{ar13} is given by 
\begin{equation}\label{ar18}
P(0,z)=s(z)=\int_{0}^{z} e^{\frac{A\rho}{\alpha}(z-\zeta)}h(\zeta) \, d\zeta + s(0)e^{\frac{A\rho}{\alpha}z}.
\end{equation}
To establish the correct boundary conditions for the unknowns $\eta$ and $P$, we proceed as before with the assumptions \eqref{ar7}, \eqref{ar11} and the forcing function $a(t)$. Then we get
\begin{equation*}
\partial_t\eta(t,0)=-\partial_t a(t)+\tilde{Q}_p,
\end{equation*}
and from the previous one we obtain the first boundary condition for $\eta$   
\begin{equation}\label{ar19}
\eta(t,0)=g(0)-a(t)+0,3\bar{\alpha}+\tilde{Q}_pt.
\end{equation}
Moreover
\begin{align}\label{ar20}
\partial_{tt}\eta(t,0)&=-\partial_{tt}a(t)-\partial_t u(t,0)\notag \\
&=-\bar{\alpha}\left(-\omega^2\sin\left(\omega t-\frac{\pi}{2}\right)+2\omega^2\cos\left(2\omega t-\frac{\pi}{2}\right)\right),
\end{align}
and the first boundary condition for the unknown $P$ is given by
\begin{align}\label{ar21}
P(t,0)&= \frac{\bar{\alpha}\alpha\omega^2-\kappa\bar{\alpha}}{A}\sin\left(\omega t-\frac{\pi}{2}\right)+\frac{\kappa\bar{\alpha}-4\bar{\alpha}\alpha\omega^2}{2A}\cos\left(2\omega t-\frac{\pi}{2}\right)\notag\\
&-\frac{\bar{\alpha}\tilde{k}\omega}{A}\left(\cos\left(\omega t-\frac{\pi}{2}\right)+\sin\left(2\omega t-\frac{\pi}{2}\right)\right)+\frac{\kappa}{A}g(0)+\frac{\tilde{k}+\kappa t}{A}\tilde{Q}_p\notag\\
&-\frac{\kappa\bar{\alpha}}{A}+\tilde{P}.
\end{align}
Similarly we obtain the following boundary condition for $z=L$
\begin{align}\label{ar22}
\eta(t,L)&=g(L)-a(t)+0,3\bar{\alpha}+\tilde{Q}_pt, \notag \\
P(t,L)&=\frac{\bar{\alpha}\alpha\omega^2-\kappa\bar{\alpha}}{A}\sin\left(\omega t-\frac{\pi}{2}\right)+\frac{\kappa\bar{\alpha}-4\bar{\alpha}\alpha\omega^2}{2A}\cos\left(2\omega t-\frac{\pi}{2}\right)\notag\\
&-\frac{\bar{\alpha}\tilde{k}\omega}{A}\left(\cos\left(\omega t-\frac{\pi}{2}\right)+\sin\left(2\omega t-\frac{\pi}{2}\right)\right)+\frac{\kappa}{A}g(L)+\frac{\tilde{k}+\kappa t}{A}\tilde{Q}_p\notag\\
&-\frac{\kappa\bar{\alpha}}{A}+\tilde{P}.
\end{align}
Finally, we observe that further compatibility conditions are given by
\begin{align}\label{ar23}
\partial_t\eta(0,L)&=\omega\bar{\alpha}+\tilde{Q}_p,\notag \\
\partial_{tt}\eta(0,L)&=-\bar{\alpha}\omega^2,\notag \\
P(0,L)=s(L)&=\tilde{P}-\frac{\alpha\bar{\alpha}\omega^2+\bar{\alpha}\tilde{k}\omega+\kappa g(L)}{A}.
\end{align}

\section{Main results}
Now we are ready to state the main results of this paper. The local existence and uniqueness of a solution to the system \eqref{ar5} with the initial conditions \eqref{ar6}, \eqref{ar10}, \eqref{ar18} and boundary conditions \eqref{ar7}, \eqref{ar19}, \eqref{ar22}, is given by the following theorem.
\begin{theorem}\label{Teorema}
Let us consider the system
\begin{eqnarray}\label{ar24}
\begin{cases}
\partial_t\eta(t,z)+\partial_t a(t)+u(t,z)-\tilde{Q}_p=0, \\
\alpha\partial_{tt}\eta(t,z)+\tilde{k}\partial_t\eta(t,z)+\kappa\eta(t,z)-AP(t,z)+A\tilde{P}=0,  \\
\rho\partial_t u(t,z)+\rho u(t,z)\partial_z u(t,z)+\partial_z P(t,z)+\beta u(t,z)=0,  
\end{cases}
\end{eqnarray}
where $t\in[0,T_0]$, $z\in[0,L]$, $a(t)\in C^{\infty}([0,T_0])$, $\tilde{Q}_p=\displaystyle{\frac{Q_p}{A}}$, $\rho, \beta, \tilde{k}, \kappa, \alpha, A, \tilde{P}\in\mathbb{R}$ are constants, with initial conditions 
\begin{align}\label{ar25}
u(0,z)&=u_0(z)=f(z)\,\in\,\mathcal{H}^s([0, L]), \notag\\
\eta(0,z)&=\eta_0(z)=g(z)\in\mathcal{H}^s([0,L]),\notag \\
P(0,z)&=P_0(z)=s(z),
\end{align}
$\displaystyle{s>\frac{9}{2}}$, where 
\begin{align}\label{ar26}
s(z)&=\int_{0}^{z} e^{\frac{A\rho}{\alpha}(z-\zeta)}h(\zeta) \, d\zeta + s(0)e^{\frac{A\rho}{\alpha}z},\\
s(0)&=\tilde{P}-\frac{\bar{\alpha}\alpha\omega^2}{A}+\frac{\tilde{k}\bar{\alpha}\omega}{A}+\frac{\tilde{k}\tilde{Q}_p}{A}+\frac{\kappa}{A} g(0),
\end{align}
with
\begin{align}\label{ar26b}
h(z)=\rho\bar{\alpha}\omega^2-\rho f(z)\partial_z f(z)&-\beta f(z)-\frac{\rho}{\alpha}\bigg[\bar{\alpha}\tilde{k}\omega+\tilde{k}\tilde{Q}_p\notag \\
&-\tilde{k}f(z)+\kappa g(z)+A\tilde{P}\bigg].
\end{align}
Consider the following boundary conditions for the system \eqref{ar24}
\begin{align}\label{ar27}
u(t,0)&=u(t,L)=0,\notag\\
\eta(t,0)&=g(0)-a(t)+0,3\bar{\alpha}+\tilde{Q}_pt, \notag\\
\eta(t,L)&=g(L)-a(t)+0,3\bar{\alpha}+\tilde{Q}_pt,\notag \\
P(t,0)&= \frac{\bar{\alpha}\alpha\omega^2-\kappa\bar{\alpha}}{A}\sin\left(\omega t-\frac{\pi}{2}\right)+\frac{\kappa\bar{\alpha}-4\bar{\alpha}\alpha\omega^2}{2A}\cos\left(2\omega t-\frac{\pi}{2}\right)\notag\\
&-\frac{\bar{\alpha}\tilde{k}\omega}{A}\left(\cos\left(\omega t-\frac{\pi}{2}\right)+\sin\left(2\omega t-\frac{\pi}{2}\right)\right)+\frac{\kappa}{A}g(0)+\frac{\tilde{k}+\kappa t}{A}\tilde{Q}_p\notag\\
&-\frac{\kappa\bar{\alpha}}{A}+\tilde{P},\notag\\
P(t,L)&=\frac{\bar{\alpha}\alpha\omega^2-\kappa\bar{\alpha}}{A}\sin\left(\omega t-\frac{\pi}{2}\right)+\frac{\kappa\bar{\alpha}-4\bar{\alpha}\alpha\omega^2}{2A}\cos\left(2\omega t-\frac{\pi}{2}\right)\notag\\
&-\frac{\bar{\alpha}\tilde{k}\omega}{A}\left(\cos\left(\omega t-\frac{\pi}{2}\right)+\sin\left(2\omega t-\frac{\pi}{2}\right)\right)+\frac{\kappa}{A}g(L)+\frac{\tilde{k}+\kappa t}{A}\tilde{Q}_p\notag\\
&-\frac{\kappa\bar{\alpha}}{A}+\tilde{P},
\end{align}
and assume that the following compatibility conditions are satisfied 
\begin{align}\label{ar28}
u(0,0)&=u(0,L)=0,\notag \\
\eta(0,0)&=\eta_0(0)=g(0), \notag \\
\eta(0,L)&=\eta_0(L)=g(L), \notag \\
\partial_t\eta(0,0)&=\omega\bar{\alpha}+\tilde{Q}_p, \notag \\
\partial_{tt}\eta(0,0)&=-\bar{\alpha}\omega^2, \notag \\
\partial_t\eta(0,L)&=\omega\bar{\alpha}+\tilde{Q}_p,\notag \\
\partial_{tt}\eta(0,L)&=-\bar{\alpha}\omega^2, \notag \\
P(0,0)&=s(0),\notag \\
P(0,L)=s(L)&=\tilde{P}-\frac{\alpha\bar{\alpha}\omega^2+\bar{\alpha}\tilde{k}\omega+\kappa g(L)}{A}.
\end{align}
Then, there exists a time $T<T_0$ such that the problem \eqref{ar24}, \eqref{ar25}, \eqref{ar27} has a local unique solution
\begin{displaymath}
\mathcal{X}(t,z)=(u(t,z), \eta(t,z), P(t,z)),
\end{displaymath}
such that
\begin{displaymath}
\mathcal{X}(t,z)\in \left[C^1((0,T)\times [0,L])\right]^3.
\end{displaymath}
\end{theorem}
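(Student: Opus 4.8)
The plan is to obtain the solution as the limit of a linearizing iteration, exactly along the lines announced in the Introduction. Starting from the time-independent approximation $\mathcal{X}^{0}\equiv(f,g,s)$, I would define $\mathcal{X}^{n+1}=(u^{n+1},\eta^{n+1},P^{n+1})$ from $\mathcal{X}^{n}$ by updating each unknown through one equation of the system. First, use the second (elastodynamics) equation as a damped-oscillator ODE in $t$, at each fixed $z$, to get $\eta^{n+1}$,
\begin{equation*}
\alpha\,\partial_{tt}\eta^{n+1}+\tilde{k}\,\partial_{t}\eta^{n+1}+\kappa\,\eta^{n+1}=A\,P^{n}-A\tilde{P},
\end{equation*}
with $\eta^{n+1}(0,\cdot)=g$ and $\partial_{t}\eta^{n+1}(0,\cdot)=\tilde{Q}_{p}-\partial_{t}a(0)-f$ (the data dictated by \eqref{ar25} and the first equation). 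Then read off $u^{n+1}$ algebraically from the first equation, $u^{n+1}=\tilde{Q}_{p}-\partial_{t}a-\partial_{t}\eta^{n+1}$. Finally recover $P^{n+1}$ from the third equation by integrating in $z$,
\begin{equation*}
\partial_{z}P^{n+1}=-\rho\,\partial_{t}u^{n+1}-\rho\,u^{n+1}\partial_{z}u^{n+1}-\beta\,u^{n+1},
\end{equation*}
with $P^{n+1}(t,0)$ taken equal to the trace prescribed in \eqref{ar27}. This particular splitting has the advantage that the boundary conditions $u^{n+1}(t,0)=u^{n+1}(t,L)=0$ hold automatically at every step, because $\partial_{t}\eta^{n+1}(t,0)$ and $\partial_{t}\eta^{n+1}(t,L)$ are forced, by uniqueness for the $\eta$-ODE, to coincide with $\partial_{t}$ of the prescribed traces of $\eta$ (which were derived from the equations precisely under $u(t,0)=u(t,L)=0$); one then checks, inductively and with the help of the compatibility conditions \eqref{ar28}, that the traces of $\eta^{n+1}$ and $P^{n+1}$ reproduce \eqref{ar27} as well.

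Granting the scheme is well posed, an induction on $n$ shows $\mathcal{X}^{n}\in[C^{1}]^{3}$ on $[0,T_{0}]\times[0,L]$; the substance of the argument is then a uniform high-order energy bound. Introducing, for $s>\tfrac{9}{2}$, an energy of the form
\begin{equation*}
\mathcal{E}_{n}(t)=\|u^{n}(t)\|_{\mathcal{H}^{s}_{z}}^{2}+\|\partial_{t}u^{n}(t)\|_{\mathcal{H}^{s-1}_{z}}^{2}+\|\eta^{n}(t)\|_{\mathcal{H}^{s}_{z}}^{2}+\|\partial_{t}\eta^{n}(t)\|_{\mathcal{H}^{s-1}_{z}}^{2}+\|P^{n}(t)\|_{\mathcal{H}^{s-1}_{z}}^{2},
\end{equation*}
I would differentiate the equations $s$ (respectively $s-1$) times in $z$, pair with the corresponding derivatives, integrate over $[0,L]$, and control the commutator and product terms by Moser-type inequalities together with the interpolation estimate \eqref{ar2}; the $z$-integration in the reconstruction of $P^{n+1}$ is what keeps that term at the right regularity level. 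This yields a differential inequality $\tfrac{d}{dt}\mathcal{E}_{n+1}(t)\le C\,\Phi(\mathcal{E}_{n}(t))\,(1+\mathcal{E}_{n+1}(t))$ with $\Phi$ continuous and increasing; since $\mathcal{E}_{n+1}(0)=\mathcal{E}_{n}(0)$ is controlled by $\|f\|_{\mathcal{H}^{s}}$ and $\|g\|_{\mathcal{H}^{s}}$, a standard continuation argument gives $T\in(0,T_{0}]$ and $M>0$, depending only on the data, with $\sup_{n}\sup_{[0,T]}\mathcal{E}_{n}\le M$.

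Convergence follows by estimating the consecutive differences $\delta\mathcal{X}^{n}=\mathcal{X}^{n+1}-\mathcal{X}^{n}$, which solve linear equations whose right-hand sides are linear in $\delta\mathcal{X}^{n-1}$ with coefficients bounded by $M$; the same estimate performed one order lower, in $\mathcal{H}^{s-1}_{z}$ so as not to lose a derivative on the transported difference, together with Grönwall gives $\sup_{[0,T]}\|\delta\mathcal{X}^{n}\|_{\mathcal{H}^{s-1}}\le C\,T\,\sup_{[0,T]}\|\delta\mathcal{X}^{n-1}\|_{\mathcal{H}^{s-1}}$, so after a final shrinking of $T$ the sequence $\mathcal{X}^{n}$ is Cauchy in $C([0,T];\mathcal{H}^{s-1}([0,L]))^{3}$ and converges to some $\mathcal{X}$. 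The uniform $\mathcal{H}^{s}$ bound and \eqref{ar2} upgrade the convergence to $\mathcal{X}\in L^{\infty}_{t}\mathcal{H}^{s}_{z}$ with $\partial_{t}\mathcal{X}\in L^{\infty}_{t}\mathcal{H}^{s-1}_{z}$; passing to the limit in the equations shows $\mathcal{X}$ solves \eqref{ar24} and satisfies \eqref{ar25}, \eqref{ar27}, and since $s>\tfrac{9}{2}$ the one-dimensional Sobolev embeddings applied to $\mathcal{X}$, $\partial_{t}\mathcal{X}$ and, through the equations, to $\partial_{tt}\eta$ and $\partial_{z}P$ yield $\mathcal{X}\in[C^{1}((0,T)\times[0,L])]^{3}$. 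Uniqueness is the same energy computation once more: the difference of two solutions with identical data satisfies a linear system of the type analysed for $\delta\mathcal{X}^{n}$, so Grönwall forces it to vanish on $[0,T]$.

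The step I expect to be the main obstacle is closing the higher-order energy estimates for the coupled system. The delicate point is the feedback of the pressure into the momentum equation: because $P$ is tied to $\partial_{t}u$ through the second and first equations, $\partial_{z}P$ looks as though it costs one $z$-derivative more than $\partial_{t}u$, and one closes the estimate only by exploiting the structure — here, the $z$-integration in the reconstruction of $P$ (equivalently, the inversion of the first-order operator $\rho-\tfrac{\alpha}{A}\partial_{z}$ under the relation $u(\cdot,0)=0$), or an algebraic cancellation between the momentum and elastodynamics equations when one forms the combined energy — which shows the apparent loss of derivative to be spurious. A second, more bookkeeping-type difficulty that nonetheless must not be skipped is to verify that the iteration, and hence the limit, honours the entire set of boundary conditions \eqref{ar27}: these are consequences of the equations rather than freely assigned data, so their propagation from iterate to iterate has to be established using the compatibility conditions \eqref{ar28}.
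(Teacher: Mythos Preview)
Your iteration is organised differently from the paper's. There the scheme runs $u^{n}\to\eta^{n+1}\to P^{n+1}\to u^{n+1}$: the first equation is integrated in time to give $\eta^{n+1}$ from $u^{n}$, the second equation is read as an \emph{algebraic} identity for $P^{n+1}$ in terms of $\eta^{n+1}$ and its time derivatives, and the third equation, linearised as
\[
\rho\,\partial_{t}u^{n+1}+\rho\,u^{n}\,\partial_{z}u^{n+1}+\beta\,u^{n+1}=-\partial_{z}P^{n+1},
\]
is solved as a linear transport PDE with the Dirichlet data $u^{n+1}(t,0)=u^{n+1}(t,L)=0$ \emph{imposed}, via the standard existence theorem for symmetric hyperbolic systems. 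The higher-order energy estimates are carried out only on this transport equation (for $v=u^{n+1}-u_{0}$), the bounds for $\eta^{n+1}$ and $P^{n+1}$ being read off the explicit formulas; contraction is then shown in $L^{\infty}_{t}L^{2}_{z}$ and upgraded by interpolation against the uniform $\mathcal{H}^{s-3}_{z}$ bound. Thus the Burgers-type equation is the only genuine PDE in the loop, and the homogeneous boundary data on $u$ are prescribed, not deduced.

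Your splitting makes everything an ODE in $t$ or an explicit formula and avoids transport-equation theory altogether, which is an appealing simplification; but the claim that $u^{n+1}(t,0)=u^{n+1}(t,L)=0$ holds automatically is where the proposal breaks. Your argument for this rests on $P^{n}(t,0)$ and $P^{n}(t,L)$ matching the prescribed traces in \eqref{ar27}, since only then does the $\eta$-ODE at the endpoints reproduce the prescribed $\eta$-traces and force $u^{n+1}$ to vanish there. At $z=0$ you set $P^{n+1}(t,0)$ by hand, so this is fine for $n\geq1$ (though already not for $n=0$, since $P^{0}\equiv s$ does not match the time-dependent trace). At $z=L$, however, $P^{n+1}(t,L)$ is determined by the $z$-integration
\[
P^{n+1}(t,L)=P^{n+1}(t,0)-\int_{0}^{L}\bigl[\rho\,\partial_{t}u^{n+1}+\rho\,u^{n+1}\partial_{z}u^{n+1}+\beta\,u^{n+1}\bigr]\,dz,
\]
and nothing forces this to equal the prescribed value: even granting $u^{n+1}(\cdot,0)=u^{n+1}(\cdot,L)=0$, the integral reduces to $(\rho\,\partial_{t}+\beta)\int_{0}^{L}u^{n+1}\,dz$, which through the oscillator ODE depends on $\int_{0}^{L}P^{n}\,dz$, a quantity the scheme does not constrain. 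So the inductive hypothesis $P^{n}(t,L)=P(t,L)$ fails to propagate, and with it $u^{n+2}(t,L)=0$. This is not mere bookkeeping, and the compatibility conditions \eqref{ar28} at $t=0$ cannot repair it for $t>0$. The paper sidesteps the issue by imposing the $u$-boundary values directly in the transport step; if you want to keep your ODE-only splitting you would need either to track and control the boundary defect through the contraction estimate, or to give up exact boundary conditions for the iterates and recover them only in the limit.
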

\begin{remark}
By using together the regularity assumptions \eqref{ar25} for $u_0(z)$ and $\eta_0(z)$ and \eqref{ar26}, \eqref{ar26b}, we have $P(0,z)=s(z)\in\mathcal{H}^s([0,L])$.
\end{remark}
A natural question which arises now is under which conditions it is possible to get global solutions. In what follows we will show that the answer is yes provided a proper choice of the initial conditions, as is stated in the following theorem on the global existence and uniqueness of a solution to the initial boundary value problem \eqref{ar24}, \eqref{ar25}, \eqref{ar27}.
\begin{theorem}\label{Teorema2}
Let us consider the system
\begin{eqnarray}\label{ar29}
\begin{cases}
\partial_t\eta(t,z)+\partial_t a(t)+u(t,z)-\tilde{Q}_p=0, \\
\alpha\partial_{tt}\eta(t,z)+\tilde{k}\partial_t\eta(t,z)+\kappa\eta(t,z)-AP(t,z)+A\tilde{P}=0,  \\
\rho\partial_t u(t,z)+\rho u(t,z)\partial_z u(t,z)+\partial_z P(t,z)+\beta u(t,z)=0,  
\end{cases}
\end{eqnarray}
where $t\in[0,T]$, $z\in[0,L]$, $a(t)\in C^{\infty}([0,T])$, $\tilde{Q}_p=\displaystyle{\frac{Q_p}{A}}$, $\rho, \beta, \tilde{k}, \kappa, \alpha, A, \tilde{P}\in\mathbb{R}$ are constants, with initial conditions \eqref{ar25} and boundary conditions \eqref{ar27}. Assume that the compatibility conditions \eqref{ar28} are satisfied and that
\begin{equation}\label{ar30}
\lVert f(z)\rVert_{\mathcal{H}^s_z}\leq\frac{\beta}{\rho}, 
\end{equation}
where $f(z)=u(0,z)$.\\
Then there exists a global unique solution 
\begin{displaymath}
\mathcal{X}(t,z)=(u(t,z), \eta(t,z), P(t,z)),
\end{displaymath}
to the problem \eqref{ar29},\eqref{ar25},\eqref{ar27} such that
\begin{displaymath}
\mathcal{X}(t,z)\in \left[C^1((0,T]\times[0,L] )\right]^3,
\end{displaymath}
for every $T\geq0$.
\end{theorem}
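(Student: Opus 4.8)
The plan is to globalize the local solution provided by Theorem~\ref{Teorema} by combining a sharp continuation principle in the spirit of \cite{majda} with an a priori bound on the velocity that exploits the ``Burgers-type'' structure of the third equation of \eqref{ar29} and the presence of the damping term $\beta u$ (here, as in the physical setting, $\beta,\rho>0$). First I would invoke Theorem~\ref{Teorema} to obtain the unique solution $\mathcal{X}=(u,\eta,P)$ on a maximal existence interval $[0,T_{\max})$, with $\mathcal{X}\in[C^{1}((0,T_{\max})\times[0,L])]^{3}$ and $u(t,\cdot),\eta(t,\cdot),P(t,\cdot)\in\mathcal{H}^{s}([0,L])$ for $t<T_{\max}$. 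The continuation principle then reduces the theorem to showing that, under \eqref{ar30}, the quantity $\lVert u(t)\rVert_{\mathcal{H}^{s}_{z}}+\lVert\eta(t)\rVert_{\mathcal{H}^{s}_{z}}+\lVert P(t)\rVert_{\mathcal{H}^{s}_{z}}$ cannot blow up as $t\uparrow T_{\max}$, which forces $T_{\max}=+\infty$; and since $s>9/2$, by Sobolev embedding it is in fact enough to keep the $W^{1,\infty}_{z}$ norms of $u,\eta,P$ --- in particular $\lVert\partial_{z}u(t)\rVert_{L^{\infty}_{z}}$ --- bounded.

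Second I would observe that $\eta$ and $P$ are slaved to $u$. The first equation of \eqref{ar29} is a linear first order ODE in $t$ for $\eta$ forced by $u$ and the smooth datum $a$, so $\eta(t)$, together with $\partial_{t}\eta=\tilde{Q}_{p}-\dot a-u$ and $\partial_{tt}\eta=-\ddot a-\partial_{t}u$, is controlled in $\mathcal{H}^{s}_{z}$ on every finite interval by $\sup_{[0,t]}\lVert u\rVert_{\mathcal{H}^{s}_{z}}$ and the data. Substituting into the second equation gives $AP=\alpha\partial_{tt}\eta+\tilde{k}\partial_{t}\eta+\kappa\eta+A\tilde{P}$, and eliminating $\partial_{t}u$ via the third equation determines $P$ (the differentiated relation $(\rho-\frac{\alpha}{A}\partial_{z})\partial_{t}u=G(u,\eta)$ is inverted using $\partial_{t}u(t,0)=0$ and \emph{gains} one $z$-derivative), so that $\lVert P(t)\rVert_{\mathcal{H}^{s}_{z}}$ is bounded in terms of $\sup_{[0,t]}\lVert u\rVert_{\mathcal{H}^{s}_{z}}$ and the data --- this is the mechanism already used for $P(0,\cdot)$ in the Remark after Theorem~\ref{Teorema}. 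Hence everything reduces to an a priori bound for $u$.

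Third --- the heart of the argument --- I would derive that bound from the third equation by the method of characteristics. Writing it as $\partial_{t}u+u\,\partial_{z}u=-\frac{1}{\rho}\partial_{z}P-\frac{\beta}{\rho}u$ and following the characteristics $\dot X(t)=u(t,X(t))$, $X(0)=z_{0}$, the function $w:=\partial_{z}u$ satisfies along $X(\cdot)$ the Riccati equation $\dot w=-w^{2}-\frac{\beta}{\rho}w-\frac{1}{\rho}\partial_{zz}P$. In the homogeneous model case $\partial_{z}P\equiv0$ this reads $\dot w=-w(w+\beta/\rho)$, which is integrated explicitly and stays bounded on $[0,\infty)$ exactly when $w(0)=f'(z_{0})\ge-\beta/\rho$, in which case $w(t)$ relaxes monotonically to a bounded steady value; since $\lVert f'\rVert_{L^{\infty}_{z}}\le C\lVert f\rVert_{\mathcal{H}^{s}_{z}}$ by Sobolev embedding, hypothesis \eqref{ar30} is precisely what enforces this sign condition uniformly in $z_{0}$, hence a uniform bound on $\lVert\partial_{z}u(t)\rVert_{L^{\infty}_{z}}$. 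For the genuine nonhomogeneous equation I would treat $-\frac{1}{\rho}\partial_{zz}P$ as a forcing term, build a particular (super)solution of the forced Riccati equation, and run a comparison argument; since by Step~2 this forcing is itself estimated through $\lVert u\rVert_{\mathcal{H}^{s}_{z}}$, the pointwise bound on $\lVert\partial_{z}u\rVert_{L^{\infty}_{z}}$ and the standard $\mathcal{H}^{s}$ energy estimate for $u$ --- in which the dissipative contribution of $-\frac{\beta}{\rho}u$ makes the threshold $\beta/\rho$ in \eqref{ar30} the right one --- close on each other by a Gr\"onwall/bootstrap argument, yielding $\sup_{[0,T_{\max})}\lVert u(t)\rVert_{\mathcal{H}^{s}_{z}}<\infty$.

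Finally, plugging this back into Step~2 also bounds $\eta$ and $P$ in $\mathcal{H}^{s}_{z}$, so the blow-up alternative of the continuation principle is excluded, $T_{\max}=+\infty$, and on every $[0,T]$ the solution has the asserted $C^{1}$ regularity; uniqueness is inherited from the local uniqueness of Theorem~\ref{Teorema} by the usual connectedness argument (the set of times at which two global solutions coincide is nonempty, closed, and open in $[0,T]$). The step I expect to be the main obstacle is the third one: because $P$ is not an independent unknown, one cannot simply quote the scalar Burgers/Riccati ODE theory, and the delicate point is to show that \eqref{ar30}, which only constrains the datum of $u$, still prevents the Riccati trajectory from escaping to $-\infty$ once the $P$-forcing (estimated in terms of $u$) is accounted for --- that is, to make the pointwise Riccati estimate and the $\mathcal{H}^{s}$ energy estimate mutually compatible without a circular loss of $z$-derivatives.
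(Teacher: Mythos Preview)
Your plan coincides with the paper's own strategy: reduce the question of global existence to the third (Burgers-type) equation, since $\eta$ and $P$ are slaved to $u$; pass to characteristics and write the Riccati ODE $\dot\omega+\omega^{2}+\frac{\beta}{\rho}\omega+\frac{1}{\rho}P_{zz}=0$ for $\omega=\partial_{z}u$; analyse the homogeneous case first to isolate the threshold $f'\ge-\beta/\rho$; then handle the forcing $P_{zz}$ and close via Majda's sharp continuation principle. The only noteworthy difference is in the treatment of the forced Riccati equation. The paper does not run a super/subsolution comparison; instead it linearises the Riccati equation by the substitution $\omega=a(t)/b(t)$, obtaining a $2\times2$ linear system with eigenvalues $\lambda_{1,2}=\tfrac{1}{2}\bigl(\tfrac{\beta}{\rho}\pm\sqrt{(\beta/\rho)^{2}-4(\beta/\rho)P_{zz}}\bigr)$, and then argues that the smallness hypothesis \eqref{ar30} on $\lVert f\rVert_{\mathcal{H}^{s}}$, through the local estimates already obtained, forces $\lVert P_{zz}\rVert_{L^{\infty}_{t}\mathcal{H}^{s-4}_{z}}\le\beta/(4\rho)$, so the eigenvalues are real and an explicit particular solution exists; from there $\omega=\bar\omega+1/y$ is bounded and the continuation principle applies. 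In other words, in the paper the condition \eqref{ar30} is used not to enforce the sign $f'\ge-\beta/\rho$ via Sobolev embedding (your phrasing ``precisely what enforces this sign condition'' would need the embedding constant to equal $1$, which it generally does not), but rather to control the forcing term $P_{zz}$ well enough to construct a real particular solution of the Riccati equation. Your comparison/bootstrap idea is a legitimate alternative, but you should be aware that the paper's route is the explicit linearisation, with \eqref{ar30} feeding into the bound on $P_{zz}$ rather than directly into the initial slope.
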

The next sections of the paper are devoted to the proof of the Theorems \ref{Teorema} and \ref{Teorema2}.

Moreover the last Section 8 deals with a further development of the global existence theorem. We analyze the particular framework in which we assume initial data very close to periodic conditions that keep the same period of the forcing term $a(t)$ and we state a stability theorem for periodic solutions of the system \eqref{ar1}.

\section{Iterative scheme}\label{iterative}

In this section we start the proof of the Theorem \ref{Teorema}. We divide the proof in two steps. In order to find a solution to the system \eqref{ar24} we define an approximating system by setting up an iterative process and we proceed by induction. In the second step we prove that the iterative scheme converges to a solution of the problem \eqref{ar24} with the initial conditions \eqref{ar25}.\\
We define the approximation of the system \eqref{ar24} as follows; the sequence $\mathcal{X}^{n+1}=(u^{n+1}, \eta^{n+1}, P^{n+1})$ satisfies the following system
\begin{eqnarray}\label{ar32}
\begin{cases}
\partial_t\eta^{n+1}(t,z)+\partial_t a(t)+u^n(t,z)-\tilde{Q}_p=0,\\
\alpha\partial_{tt}\eta^{n+1}(t,z)+\tilde{k}\partial_t\eta^{n+1}(t,z)+\kappa\eta^{n+1}(t,z)-AP^{n+1}(t,z)+A\tilde{P}=0,\\
\rho\partial_t u^{n+1}(t,z)+\rho u^{n}(t,z)\partial_z u^{n+1}(t,z)+\partial_z P^{n+1}(t,z)+\beta u^{n+1}(t,z)=0,
\end{cases}
\end{eqnarray}
where $z\in[0,L]$, $t\in(0, T_0)$, with $T_0\geq T>0$. The initial conditions are given by 
\begin{align}\label{ar33}
u^{n+1}(0,z)&=u_0(z)=f(z)\in\mathcal{H}^{s}([0,L]),\notag\\
\eta^{n+1}(0,z)&=\eta_0(z)=g(z)\in\mathcal{H}^{s}([0,L]),\notag\\
P^{n+1}(0,z)&=P_0(z)=s(z)\notag\\
&=\int_{0}^{z} e^{\frac{\alpha}{\rho}(z-\zeta)}h(\zeta) \, d\zeta + s(0)e^{\frac{\alpha}{\rho}z}\in\mathcal{H}^{s}([0,L]),
\end{align}
and the boundary conditions are the following 
\begin{align}\label{ar34}
u^{n+1}(t,0)&=u(t,L)=0,\notag\\
\eta^{n+1}(t,0)&=g(0)-a(t)+0,3\bar{\alpha}+\tilde{Q}_pt, \notag\\
\eta^{n+1}(t,L)&=g(L)-a(t)+0,3\bar{\alpha}+\tilde{Q}_pt,\notag \\
P^{n+1}(t,0)&=\frac{\bar{\alpha}\alpha\omega^2-\kappa\bar{\alpha}}{A}\sin\left(\omega t-\frac{\pi}{2}\right)+\frac{\kappa\bar{\alpha}-4\bar{\alpha}\alpha\omega^2}{2A}\cos\left(2\omega t-\frac{\pi}{2}\right)\notag\\
&-\frac{\bar{\alpha}\tilde{k}\omega}{A}\left(\cos\left(\omega t-\frac{\pi}{2}\right)+\sin\left(2\omega t-\frac{\pi}{2}\right)\right)+\frac{\kappa}{A}g(0)+\frac{\tilde{k}+\kappa t}{A}\tilde{Q}_p\notag\\
&-\frac{\kappa\bar{\alpha}}{A}+\tilde{P},\notag\\
P^{n+1}(t,L)&=\frac{\bar{\alpha}\alpha\omega^2-\kappa\bar{\alpha}}{A}\sin\left(\omega t-\frac{\pi}{2}\right)+\frac{\kappa\bar{\alpha}-4\bar{\alpha}\alpha\omega^2}{2A}\cos\left(2\omega t-\frac{\pi}{2}\right)\notag\\
&-\frac{\bar{\alpha}\tilde{k}\omega}{A}\left(\cos\left(\omega t-\frac{\pi}{2}\right)+\sin\left(2\omega t-\frac{\pi}{2}\right)\right)+\frac{\kappa}{A}g(L)+\frac{\tilde{k}+\kappa t}{A}\tilde{Q}_p\notag\\
&-\frac{\kappa\bar{\alpha}}{A}+\tilde{P}.
\end{align}
For the initial step we fix 
\begin{align}\label{ar35}
u^0(t,z)&=u_0(z)=f(z),\notag\\
\eta^0(t,z)&=\eta_0(z)=g(z),\notag\\
P^0(t,z)&=P_0(z)=s(z).
\end{align}
In order to prove the existence of a solution to the approximating system \eqref{ar32} we proceed by induction on the iteration number $n$.\\
\\
\textbf{Basic step:} $n=0$. If we fix $n=0$ the system \eqref{ar32} becomes
\begin{eqnarray}\label{ar36}
\begin{cases}
\partial_t\eta^{1}(t,z)+\partial_t a(t)=\tilde{Q}_p-u^0(t,z),\\
\alpha\partial_{tt}\eta^{1}(t,z)+\tilde{k}\partial_t\eta^{1}(t,z)+\kappa\eta^{1}(t,z)-AP^{1}(t,z)+A\tilde{P}=0,\\
\rho\partial_t u^{1}(t,z)+\rho u^{0}(t,z)\partial_z u^{1}(t,z)+\partial_z P^{1}(t,z)+\beta u^{1}(t,z)=0,
\end{cases}
\end{eqnarray}
with the following initial conditions
\begin{align}\label{ar37}
u^1(0,z)&=f(z)\in\mathcal{H}^{s}([0,L]),\notag\\
\eta^1(0,z)&=g(z)\in\mathcal{H}^{s}([0,L]), \notag\\
P^1(0,z)&=s(z)\in\mathcal{H}^{s}([0,L]).
\end{align}
Now, we prove the existence of a solution to the problem \eqref{ar36}, \eqref{ar37}. From the first equation of the system \eqref{ar36} we obtain a unique solution $\eta^1$ of the form
\begin{equation}\label{ar38}
\eta^1(t,z)=g(z)-f(z)t-a(t)+0,3\bar{\alpha}+\tilde{Q}_pt.
\end{equation}
Then, by using the regularity conditions \eqref{ar37} we have 
\begin{equation}\label{ar39}
 \eta^1(t,z)\in C^{\infty}((0,T_0); \mathcal{H}^{s}([0,L])).
\end{equation}
Now we replace \eqref{ar38} into the second equation of \eqref{ar36} and we obtain
\begin{align}\label{ar40}
P^1(t,z)= -\frac{1}{A}\bigg[\alpha\partial_{tt}a(t)-\tilde{k}\tilde{Q}_p&+\tilde{k}f(z)+\tilde{k}\partial_t a(t)-\kappa g(z)+\kappa tf(z)\notag \\
&+\kappa a(t)-0,3\alpha\kappa-\tilde{Q}_p\kappa t -A\tilde{P}\bigg].
\end{align}
Since $a(t)\in C^{\infty}([0,T_0])$ and by using the assumptions \eqref{ar37}, we get that
\begin{equation}\label{ar41}
P^1(t,z)\in C^{\infty}((0,T_0); \mathcal{H}^{s}([0,L])).
\end{equation}
Moreover, since
\begin{equation}
\partial_z P^1(t,z)=-\frac{1}{A}\left[\tilde{k}\partial_z f(z)-\kappa\partial_z g(z)+\kappa t\partial_z f(z)\right],
\end{equation}
we observe that 
\begin{equation}\label{ar42}
\partial_z P^1(z)\in \mathcal{H}^{s-1}([0,L]).
\end{equation}
Finally we need to find $u^1$. The sequence $u^1$ satisfies the following first order hyperbolic system
\begin{eqnarray}\label{ar43}
\begin{cases}
\rho\partial_t u^1(t,z)+\rho f(z)\partial_z u^1(t,z)+\beta u^1(t,z)=-\partial_z P^1(t,z),\\
u^1(0,z)=f(z)\in\mathcal{H}^s([0,L]).
\end{cases}
\end{eqnarray}
In order to find a solution of \eqref{ar43}, we need to recall the following existence theorem for a first order symmetric hyperbolic system (\cite{novot}).
\begin{theorem}\label{iperb}
Let $\mathbb{A}_j(x,t),\, j=0,..., N$ be symmetric $m\times m$ matrices in $M\times (0, T)$ with $M\subset\mathbb{R}^N$, $T>0$, $\mathbf{f}(x, t)$ and $\mathbf{v}_0(x)$, m-dimensional vector functions defined in $M\times (0, T)$ and $M$, respectively. Let us consider the problem 
\begin{subequations}
\begin{equation}\label{ar44}
\mathbb{A}_0(x,t)\partial_t\mathbf{v}+\sum_{j=1}^N\,\mathbb{A}_j(x,t)\partial_j\mathbf{v}+\mathbb{B}(x,t)\mathbf{v}=\mathbf{f}(x,t), 
\end{equation}
\begin{equation}\label{ar45}
\mathbf{v}(x,0)=\mathbf{v}_0(x).
\end{equation}
\end{subequations}
Let
\begin{displaymath}
\mathbb{A}_j\in\left[C([0,T], \mathcal{H}^s(M))\cap C^1((0,T), \mathcal{H}^{s-1}(M))\right]^{m\times m}, \ \ j= 0, 1,..., N,
\end{displaymath}
$\mathbb{A}_0(x,t)$ invertible, $\inf_{x,t}\left\|\mathbb{A}_0(x,t)\right\|_{M\times M}>0$, 
$\mathbb{B}\in C((0,T); \mathcal{H}^{s-1}(M))^{m\times m}$, $\mathbf{f}\in C((0,T); \mathcal{H}^s(M))^m$, $\mathbf{v}_0\in\mathcal{H}^s(M)^m$, where $s>\frac{N}{2}+1$ is an integer. Then there exists a unique solution to \eqref{ar44}, \eqref{ar45}, i.e. a function 
\begin{displaymath}
\mathbf{v}\in \left[C([0,T); \mathcal{H}^s(M))\cap C^1((0,T); \mathcal{H}^{s-1}(M))\right]^m
\end{displaymath}
satisfying \eqref{ar44} and \eqref{ar45} pointwise (i.e. in the classical sense).
\end{theorem}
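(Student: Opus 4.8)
The statement is the classical well-posedness theorem for linear symmetric hyperbolic systems, and the plan is the standard energy-method scheme: (i) a priori estimates in $\mathcal{H}^s$; (ii) construction of approximate solutions whose estimates are uniform; (iii) passage to the limit; (iv) uniqueness and the asserted time-regularity, both read off from the energy identities. Throughout one uses that $s>\frac N2+1$, so that $\mathcal{H}^s(M)$ is a Banach algebra continuously embedded in $C^1(\bar M)$ and $\mathcal{H}^{s-1}(M)\hookrightarrow C^0(\bar M)$; this is what lets the variable coefficients $\mathbb{A}_j$ and $\mathbb{B}$ be handled as multipliers.

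First I would derive the $L^2$ estimate for a smooth solution. Pairing \eqref{ar44} with $\mathbf v$ in $L^2(M)$ and integrating by parts, the symmetry of $\mathbb{A}_j$ converts $\sum_j\langle\mathbb{A}_j\partial_j\mathbf v,\mathbf v\rangle$ into the zeroth-order term $-\tfrac12\sum_j\langle(\partial_j\mathbb{A}_j)\mathbf v,\mathbf v\rangle$, controlled by $\sum_j\|\mathbb{A}_j\|_{C^1}$. Introducing the energy $E(t)=\int_M\langle\mathbb{A}_0\mathbf v,\mathbf v\rangle\,dx$, which is equivalent to $\|\mathbf v(t)\|_{L^2}^2$ because $\inf\|\mathbb{A}_0\|>0$, one obtains $E'(t)\le C\big(E(t)+\|\mathbf f(t)\|_{L^2}\|\mathbf v(t)\|_{L^2}\big)$ and, by Gronwall, an $L^2$ bound on $[0,T]$. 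Applying $\partial^\alpha$, $|\alpha|\le s$, and repeating the computation for $\partial^\alpha\mathbf v$ produces a commutator $[\partial^\alpha,\mathbb{A}_j\partial_j]\mathbf v$, which by the Moser / Kato--Ponce product and commutator inequalities (available precisely because $\mathcal{H}^s$ is an algebra) is bounded in $L^2$ by $C\big(\sum_j\|\mathbb{A}_j\|_{\mathcal{H}^s}\big)\|\mathbf v\|_{\mathcal{H}^s}$; summing over $\alpha$ gives
\[
\frac{d}{dt}\|\mathbf v(t)\|_{\mathcal{H}^s}^2\le C\big(\|\mathbf v(t)\|_{\mathcal{H}^s}^2+\|\mathbf f(t)\|_{\mathcal{H}^s}^2\big),
\]
hence $\|\mathbf v\|_{L^\infty(0,T;\mathcal{H}^s)}\le C(T)\big(\|\mathbf v_0\|_{\mathcal{H}^s}+\|\mathbf f\|_{L^2(0,T;\mathcal{H}^s)}\big)$.

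For existence I would regularize, e.g.\ by a Friedrichs mollification of the first-order operator or by adding a term $-\varepsilon\Delta\mathbf v_\varepsilon$ (a Galerkin truncation works equally well). For fixed $\varepsilon>0$ the regularized system is linear and, in the parabolic case, has a unique smooth solution by semigroup theory; the key point is that the regularizing term contributes a nonnegative quantity ($+\varepsilon\sum_{|\alpha|\le s}\|\nabla\partial^\alpha\mathbf v_\varepsilon\|_{L^2}^2$) to the energy identity, so the $\mathcal{H}^s$ estimate of the previous step holds uniformly in $\varepsilon$. Since the equation is linear, weak-$*$ compactness of $\{\mathbf v_\varepsilon\}$ in $L^\infty(0,T;\mathcal{H}^s)$ — together with the uniform bound on $\partial_t\mathbf v_\varepsilon$ in $L^\infty(0,T;\mathcal{H}^{s-2})$ coming from the equation, which controls the trace at $t=0$ — yields a subsequence converging to a limit $\mathbf v\in L^\infty(0,T;\mathcal{H}^s)$ solving \eqref{ar44}--\eqref{ar45} (every term passes to the limit by linearity).

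Uniqueness is then immediate: the difference of two solutions solves the homogeneous problem with zero data, and the $L^2$ estimate forces it to vanish. For the time-regularity one upgrades weak to strong continuity: $\mathbf v$ is weakly continuous into $\mathcal{H}^s$ by the equation, and $t\mapsto\|\mathbf v(t)\|_{\mathcal{H}^s}$ is continuous because the energy relation, once made rigorous by mollifying in $x$ and passing to the limit, is an equality; hence $\mathbf v\in C([0,T);\mathcal{H}^s)$. Solving the equation for $\partial_t\mathbf v=\mathbb{A}_0^{-1}\big(\mathbf f-\sum_j\mathbb{A}_j\partial_j\mathbf v-\mathbb{B}\mathbf v\big)$ gives $\partial_t\mathbf v\in C((0,T);\mathcal{H}^{s-1})$, so $\mathbf v\in C^1((0,T);\mathcal{H}^{s-1})$, and the embeddings $\mathcal{H}^s\hookrightarrow C^1(\bar M)$, $\mathcal{H}^{s-1}\hookrightarrow C^0(\bar M)$ turn this into the classical statement. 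The principal obstacle is step (ii): making the formal energy identities rigorous (Friedrichs mollifier or difference quotients in $x$) and, above all, the commutator estimate for $[\partial^\alpha,\mathbb{A}_j\partial_j]\mathbf v$, which is exactly where the hypothesis $s>\frac N2+1$ on the coefficients is used.
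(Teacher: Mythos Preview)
The paper does not prove this theorem at all: it is quoted verbatim from \cite{novot} as a known result and then applied to the scalar transport equation \eqref{ar43}. So there is no ``paper's own proof'' to compare your proposal against.

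That said, your sketch is exactly the standard Friedrichs energy-method argument one finds in \cite{novot}, \cite{majda} or \cite{taylor}, and the outline (a priori $\mathcal{H}^s$ estimate via symmetry and Moser/Kato--Ponce commutator bounds; existence through a parabolic or mollifier regularization with $\varepsilon$-uniform estimates; uniqueness from the $L^2$ estimate; strong time-continuity by combining weak continuity with continuity of the norm) is correct and complete as a plan. The one point you pass over is that in the step ``integrating by parts, the symmetry of $\mathbb{A}_j$ converts $\sum_j\langle\mathbb{A}_j\partial_j\mathbf v,\mathbf v\rangle$ into the zeroth-order term $-\tfrac12\sum_j\langle(\partial_j\mathbb{A}_j)\mathbf v,\mathbf v\rangle$'' you silently drop the boundary contribution $\tfrac12\int_{\partial M}\langle(\sum_j n_j\mathbb{A}_j)\mathbf v,\mathbf v\rangle$; the theorem as stated in the paper allows a general bounded $M\subset\mathbb{R}^N$ without specifying boundary conditions, so strictly speaking either one works on $\mathbb{R}^N$ (or $\mathbb{T}^N$) where there is no boundary, or one must impose maximally dissipative boundary conditions for the estimate to close. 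This is an imprecision inherited from the paper's statement rather than a flaw in your argument, and in the paper's application ($N=m=1$, $M=[0,L]$, homogeneous Dirichlet data on $u$) the boundary term indeed vanishes.
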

\par
We apply the Theorem \ref{iperb} to the problem \eqref{ar43}, with $M=[0,L]$, $\mathbf{f}(x, t)=\partial_z P^1(t,z)$ and we observe that we fullfill the hypothesis of the Theorem \ref{iperb}. Then, there exists a unique solution $u^1$ such that
\begin{displaymath}
u^1(t,z)\in C([0,T_0); \mathcal{H}^{s-1}([0,L]))\cap C^1((0,T_0); \mathcal{H}^{s-2}([0,L])).
\end{displaymath}
Finally, we have shown that there exists a unique solution
\begin{equation}\label{ar46}
\mathcal{X}^1(t,z)=(\eta^1(t,z), P^1(t,z), u^1(t,z)),
\end{equation}
to the system \eqref{ar36} with initial data \eqref{ar37} such that 
\begin{align}\label{ar47}
\eta^1(t,z)&\in C^{\infty}((0,T_0); \mathcal{H}^{s}([0,L])), \notag\\
P^1(t,z)&\in C^{\infty}((0,T_0); \mathcal{H}^{s}([0,L])), \notag\\
u^1(t,z)\in C([0,T_0); \mathcal{H}&^{s-1}([0,L]))\cap C^1((0,T_0); \mathcal{H}^{s-2}([0,L])).
\end{align}
Now, we assume that for the \textbf{\textit{n}}\textbf{-th step} there exists a unique solution 
\begin{equation}\label{ar48}
\mathcal{X}^n(t,z)=(\eta^n(t,z), P^n(t,z), u^n(t,z)),
\end{equation}
such that
\begin{align}\label{ar49}
\eta^n(t,z)&\in C^{\infty}((0,T_0); \mathcal{H}^{s}([0,L])), \notag\\
P^n(t,z)&\in C^{\infty}((0,T_0); \mathcal{H}^{s}([0,L])), \notag\\
u^n(t,z)\in C([0,T_0); \mathcal{H}&^{s-1}([0,L]))\cap C^1((0,T_0); \mathcal{H}^{s-2}([0,L])),
\end{align}
and we want to prove the existence of the $(n+1)$-th step.\\
\textbf{\textit{(n+1)}-th step.}
The $n+1$ iteration corresponds to the system
\begin{eqnarray}\label{ar50}
\begin{cases}
\partial_t\eta^{n+1}(t,z)+\partial_t a(t)+u^n(t,z)-\tilde{Q}_p=0,\\
\alpha\partial_{tt}\eta^{n+1}(t,z)+\tilde{k}\partial_t\eta^{n+1}(t,z)+\kappa\eta^{n+1}(t,z)-AP^{n+1}(t,z)+A\tilde{P}=0,\\
\rho\partial_t u^{n+1}(t,z)+\rho u^{n}(t,z)\partial_z u^{n+1}(t,z)+\partial_z P^{n+1}(t,z)+\beta u^{n+1}(t,z)=0,
\end{cases}
\end{eqnarray}
with the initial data \eqref{ar33}.\\
Similarly to the basic step we can write
\begin{equation}\label{ar51}
\eta^{n+1}(t,z)=g(z)-\int_0^t u^n(s,z)\,ds-a(t)+0,3\bar{\alpha}+\tilde{Q}_pt,
\end{equation}
and we get 
\begin{displaymath}
\eta^{n+1}(t,z)\in C^{\infty}((0,T_0); \mathcal{H}^{s}([0,L])).
\end{displaymath}
We replace \eqref{ar51} in the second equation of the system \eqref{ar50} and we obtain
\begin{align}\label{ar52}
P^{n+1}(t,z)=& \frac{1}{A}\bigg[-\alpha\partial_t u^n(t,z)-\alpha\partial_{tt}a(t)+\tilde{k}\tilde{Q}_p-\tilde{k}u^n(t,z)-\tilde{k}\partial_t a(t)\notag\\ 
&+\kappa g(z)-\kappa\int_0^tu^n(s,z)\,ds-\kappa a(t)+0,3\bar{\alpha}\kappa\notag\\
&+\tilde{Q}_p\kappa t+A\tilde{P}\bigg].
\end{align}
From the $n$-th iteration we know there exists a unique solution 
\begin{equation}\label{ar53}
u^n(t,z)\in C([0,T_0); \mathcal{H}^{s-1}([0,L]))\cap C^1((0,T_0); \mathcal{H}^{s-2}([0,L])),
\end{equation} 
by which we get
\begin{equation}\label{ar54}
P^{n+1}(t,z)\in C((0,T_0); \mathcal{H}^{s-2}([0,L])).
\end{equation}
After that, we observe that the last equation of the system \eqref{ar50}
\begin{equation}\label{ar55}
\rho\partial_t u^{n+1}(t,z)+\rho u^n(t,z)\partial_z u^{n+1}(t,z)+\beta u^{n+1}(t,z)= \partial_z P^{n+1}(t,z),
\end{equation}
satisfies the hypothesis of the Theorem \ref{iperb}. In fact
\begin{align}
\partial_z P^{n+1}(t,z)=\frac{1}{A}\bigg[-\alpha\partial_{zt}u^n(t,z)&-\tilde{k}\partial_z u^n(t,z)+\kappa\partial_z g(z)\notag \\
&-\kappa\int_0^t\partial_z u^n(s,z)\,ds\bigg],
\end{align}
and by using \eqref{ar53} we get
\begin{equation}\label{ar56}
\partial_z P^{n+1}(t,z)\in C((0,T_0); \mathcal{H}^{s-3}([0,L])).
\end{equation}
So by applying the Theorem \ref{iperb}, we can assert that there exists a unique solution 
\begin{equation}\label{ar57}
u^{n+1}(t,z)\in C([0,T_0); \mathcal{H}^{s-3}([0,L]))\cap C^1((0,T_0); \mathcal{H}^{s-4}([0,L])).
\end{equation} 
Finally we can summarize the existence of a solution to the approximating system in the following proposition.
\begin{proposition}\label{propos}
There exists a unique triple of solutions 
\begin{equation}\label{ar58}
\mathcal{X}^{n+1}(t,z)=(\eta^{n+1}(t,z), P^{n+1}(t,z), u^{n+1}(t,z)),
\end{equation}
to the problem \eqref{ar50},\eqref{ar33}, where
\begin{align}\label{ar59}
\eta^{n+1}(t,z)&\in C^{\infty}((0,T_0); \mathcal{H}^{s}([0,L])), \notag\\
P^{n+1}(t,z)&\in C((0,T_0); \mathcal{H}^{s-2}([0,L])), \notag\\
u^{n+1}(t,z)\in C([0,T_0); \mathcal{H}&^{s-3}([0,L]))\cap C^1((0,T_0); \mathcal{H}^{s-4}([0,L])).
\end{align}
\end{proposition}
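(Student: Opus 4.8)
The plan is to argue by induction on the iteration index $n$, exploiting the triangular structure of the approximating system \eqref{ar32}: at each level the first equation should determine $\eta^{n+1}$ explicitly, the second one should then give $P^{n+1}$ by a closed-form algebraic expression, and only the third equation — a linear transport equation for $u^{n+1}$ — requires an abstract tool, namely the symmetric-hyperbolic existence result Theorem \ref{iperb}.

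For the base case $n=0$ I would take $u^0\equiv f$, $\eta^0\equiv g$, $P^0\equiv s$ as in \eqref{ar35}. Integrating the first equation of \eqref{ar36} in time produces the explicit $\eta^1$ of \eqref{ar38}, which lies in $C^\infty((0,T_0);\mathcal{H}^s([0,L]))$ since $a\in C^\infty$ and $f,g\in\mathcal{H}^s([0,L])$; substituting it into the second equation yields the closed form \eqref{ar40} for $P^1$, smooth in $t$ with values in $\mathcal{H}^s([0,L])$, hence $\partial_z P^1\in\mathcal{H}^{s-1}([0,L])$. The third equation then reads as the problem \eqref{ar43}, with time-independent coefficient $\rho f(z)$, zeroth-order term $\beta$ and source $-\partial_z P^1$; I would check that $\rho\neq 0$, that $\rho f$ has the regularity required of the leading coefficient, and that the source lies in $C((0,T_0);\mathcal{H}^{s-1}([0,L]))$, so that Theorem \ref{iperb} (with $N=1$ and $s>9/2>3/2$) gives a unique $u^1$ with the regularity recorded in \eqref{ar47}.

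For the inductive step I would assume $\mathcal{X}^n$ exists with the regularity \eqref{ar49} and repeat the same three moves for \eqref{ar50}: the first equation gives $\eta^{n+1}$ via the time integral \eqref{ar51}, one $t$-derivative smoother than $u^n$; the second gives $P^{n+1}$ by \eqref{ar52}, whose $z$-derivative involves $\partial_{zt}u^n$ and $\partial_z u^n$ and so sits in $C((0,T_0);\mathcal{H}^{s-3}([0,L]))$ by \eqref{ar53}; and the third equation \eqref{ar55} is again of the form handled by Theorem \ref{iperb}, now with variable transport coefficient $\rho u^n(t,z)$ and source $-\partial_z P^{n+1}$. Granting that $u^n$ has the regularity required of that coefficient and that the source is in the right class, Theorem \ref{iperb} produces a unique $u^{n+1}$ with the regularity \eqref{ar57}. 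Uniqueness of the triple $\mathcal{X}^{n+1}$ is then immediate, since $\eta^{n+1}$ and $P^{n+1}$ are given by explicit formulae and $u^{n+1}$ is the unique solution furnished by Theorem \ref{iperb}; collecting \eqref{ar51}, \eqref{ar54} and \eqref{ar57} gives the asserted regularities.

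The step I expect to be the main obstacle is the bookkeeping of Sobolev regularity along the chain of substitutions: each passage through the second equation costs two derivatives, because $P^{n+1}$ sees $\partial_t u^n$ and hence $\partial_z P^{n+1}$ sees $\partial_{zt}u^n$. One therefore has to make sure that $s>9/2$ is large enough that the transport coefficient $\rho u^n$ and the source $-\partial_z P^{n+1}$ still fulfil the hypotheses of Theorem \ref{iperb} — in particular that $u^n\in C([0,T_0);\mathcal{H}^{s-1}([0,L]))\cap C^1((0,T_0);\mathcal{H}^{s-2}([0,L]))$ controls the coefficient and that $-\partial_z P^{n+1}\in C((0,T_0);\mathcal{H}^{s-3}([0,L]))$ with $s-3>3/2$ — and to check that the prescribed boundary data \eqref{ar34} are compatible with these solutions on the bounded interval $[0,L]$. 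This loss of derivatives at each iteration is exactly why the uniform-in-$n$ energy estimates of the next section, and not this proposition alone, are what will let one pass to the limit $n\to\infty$.
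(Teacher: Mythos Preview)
Your proposal follows essentially the same argument as the paper: induction on $n$, with the base case and inductive step each handled by solving \eqref{ar50} in the order $\eta^{n+1}\to P^{n+1}\to u^{n+1}$ via the explicit formulae \eqref{ar51}--\eqref{ar52} and then invoking Theorem~\ref{iperb} for the linear transport equation \eqref{ar55}. Your remarks on the derivative-loss bookkeeping and on why this proposition alone does not close the scheme are accurate and in fact slightly more explicit than the paper's own presentation.
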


\section{Convergence of the iterative scheme}

In this section we will prove that the iterative scheme previously constructed converges to a solution of the problem \eqref{ar24}, \eqref{ar25}, \eqref{ar27}. \\
First of all we will recover suitable energy estimate for our problem by analyzing the equation $\eqref{ar50}_3$ that is the most tricky one because of its nonlinearity but at the same time it is the equation that allows us to obtain results about the whole system \eqref{ar24}.\\
We prove that $\{u^n\}$ is a Cauchy sequence and so we get the existence of a classical solution to the equation $\eqref{ar24}_3$. Hence, we will be able to prove the strong convergence for the other sequences of unknowns and then the existence of classical solutions to the equations $\eqref{ar24}_1$ and $\eqref{ar24}_2$.

\subsection{Higher order energy estimates for the sequence $\pmb{\{u^n\}}$}\label{5.1}

We proved that 
\begin{displaymath}
u^{n+1}(t,z)\in C([0,T_0); \mathcal{H}^{s-3}([0,L]))\cap C^1((0,T_0); \mathcal{H}^{s-4}([0,L]))
\end{displaymath}
is a solution to the equation
\begin{equation}\label{ar60}
\rho\partial_t u^{n+1}(t,z)+\rho u^{n}(t,z)\partial_z u^{n+1}(t,z)+\partial_z P^{n+1}(t,z)+\beta u^{n+1}(t,z)=0.
\end{equation}
Henceforth we will not denote explicitly the space and time dependence and in order to simplify the notation, we set
\begin{equation}\label{ar61}
u=u^{n}, \ \ v=u^{n+1}-u_0, \ \  P=P^{n+1}, 
\end{equation}
\begin{equation}\label{ar62}
v_{\alpha}=\frac{\partial^{\alpha}}{\partial z^{\alpha}}v, 
\end{equation}
for any $\alpha<s$.\\
First of all we analyze the third equation of the system \eqref{ar24} that, with the new notation, becomes
\begin{subequations}
\begin{equation}\label{ar63}
\rho\partial_t v+\rho u\partial_z v+\beta v= -\partial_z P-\rho u\partial_zf(z)-\beta f(z),
\end{equation}
\begin{equation}\label{ar64}
v(0,z)=0.
\end{equation}
\end{subequations}
If we apply to the equation \eqref{ar63} the operator $\displaystyle{\frac{\partial^{\alpha}}{\partial z^{\alpha}}}$, then we get
\begin{subequations}
\begin{equation}\label{ar65}
\rho\partial_t v_{\alpha}+\rho u\partial_z v_{\alpha}+\beta v_{\alpha}= \frac{\partial^{\alpha}}{\partial z^{\alpha}}(-\partial_z P-\rho u\partial_z f(z)-\beta f(z))-\rho\left(\frac{\partial^{\alpha}}{\partial z^{\alpha}}u\right)\partial_z v,
\end{equation}
\begin{equation}\label{ar66}
v_{\alpha}(0,z)=0.
\end{equation} 
\end{subequations}
Let's define
\begin{equation}\label{ar67}
E^{\alpha}(t):=\frac{1}{2}\int_{0}^L\,\rho v_{\alpha}v_{\alpha}\, dz.
\end{equation}
Applying the time derivative to \eqref{ar67} and by using the equation \eqref{ar65} we get
\begin{align}\label{ar68}
\frac{dE^{\alpha}}{dt}(t):=\int_{0}^L\,\rho \partial_t v_{\alpha}v_{\alpha}&\, dz= -\int_{0}^L\, \bigg[\frac{\partial^{\alpha}}{\partial z^{\alpha}}(\partial_z P+\rho u\partial_z f(z)+\beta f(z))\notag \\
&+\rho u\partial_z v_{\alpha}+\beta v_{\alpha}+\rho\left(\frac{\partial^{\alpha}}{\partial z^{\alpha}}u\right)\partial_z v\bigg] v_{\alpha}\,dz.
\end{align} 
Now, we estimate each one of the terms of the right hand side of \eqref{ar68}.
\begin{equation}\label{ar69}
\left\lvert\int_{0}^L\,\frac{\partial^{\alpha+1}}{\partial z^{\alpha+1}}Pv_{\alpha}\,dz\right\rvert\leq \frac{1}{2}\bigg(\left\| \frac{\partial^{\alpha+1}}{\partial z^{\alpha+1}}P\right\|^2_2+\left\| v_{\alpha}\right\|^2_2\bigg),
\end{equation}
\begin{align}\label{ar70}
\left\lvert\int_{0}^L\,\rho\frac{\partial^{\alpha}}{\partial z^{\alpha}}\left(u\partial_z f(z)\right) v_{\alpha}\,dz\right\rvert&\leq\left\lvert\rho\int_{0}^L\,\left(\frac{\partial^{\alpha}}{\partial z^{\alpha}}u\right)\partial_z f(z) v_{\alpha}\,dz\right\rvert\notag\\
&+\left\lvert\rho\int_{0}^L\,u\left(\frac{\partial^{\alpha}}{\partial z^{\alpha}}\partial_z f(z)\right) v_{\alpha}\,dz\right\rvert\notag\\
&=I_1+I_2,
\end{align}
and
\begin{equation}
I_1\leq\frac{\rho}{2}\left\|v_{\alpha}\right\|_2^2+\frac{\rho}{2}\left\|\partial_z f(z)\right\|_{\infty}^2\left\|\frac{\partial_z^{\alpha}}{\partial z^{\alpha}}u\right\|^2_2,
\end{equation}
\begin{equation}
I_2\leq \frac{\rho}{2}\left\|v_{\alpha}\right\|_2^2+\frac{\rho}{2}\left\|u\right\|_{\infty}^2\left\|\frac{\partial_z^{\alpha+1}}{\partial z^{\alpha+1}}f(z)\right\|_2^2,
\end{equation}
In a similar way for the last terms we get
\begin{align}\label{ar71}
\bigg\lvert\int_{0}^L\,&\left(\beta\frac{\partial^{\alpha}}{\partial z^{\alpha}}f(z)+\rho u\partial_z v_{\alpha}+\beta v_{\alpha}+\rho\bigg(\frac{\partial^{\alpha}}{\partial z^{\alpha}}u\bigg)\partial_z v\right)v_{\alpha}\,dz\bigg\rvert \notag\\
&\leq\frac{\lvert\beta\rvert}{2}\bigg(\left\|\frac{\partial^{\alpha}}{\partial z^{\alpha}}f(z)\right\|^2_2+\left\| v_{\alpha}\right\|_2^2\bigg)+\frac{\rho}{2}\left\|\partial_z u\right\|_{\infty}\left\| v_{\alpha}\right\|_2^2+\lvert\beta\rvert\left\| v_{\alpha}\right\|_2^2\notag\\
&+\rho\left\| v_{\alpha}\right\|_2^2\left\| \frac{\partial^{\alpha}}{\partial z^{\alpha}}u\right\|_2,
\end{align}
Then, by summing up \eqref{ar69}-\eqref{ar71} we get
\begin{equation}\label{ar72}
\frac{dE^{\alpha}}{dt}(t)\leq C+\tilde{C}\left\|v_{\alpha}\right\|_2^2,
\end{equation}
where
\begin{equation}\label{ar73}
\tilde{C}=\sup_{t\in[0,T]}\left\{\frac{1}{2}+\rho+\frac{3\lvert\beta\rvert}{2}+\frac{\rho}{2}\left\|\partial_z u\right\|_{\infty}+\rho\left\|\frac{\partial^{\alpha}}{\partial z^{\alpha}}u\right\|_2\right\},
\end{equation}
\begin{align}\label{ar74}
C=\sup_{t\in[0,T]}&\bigg\{\frac{1}{2}\left\|\frac{\partial^{\alpha+1}}{\partial z^{\alpha+1}}P\right\|_2^2+\frac{\rho}{2}\left\|\frac{\partial^{\alpha}}{\partial z^{\alpha}u}\right\|_2^2\left\|\partial_z f(z)\right\|_{\infty}^2\notag \\
&+\frac{\rho}{2}\left\|u\right\|_{\infty}^2\left\|\frac{\partial^{\alpha+1}}{\partial z^{\alpha+1}}f(z)\right\|_2^2+\frac{\lvert\beta\rvert}{2}\left\|\frac{\partial^{\alpha}}{\partial z^{\alpha}}f(z)\right\|_2^2\bigg\}.
\end{align}
Now, after a detailed analysis of the terms \eqref{ar73} and \eqref{ar74} we have
\begin{equation}\label{ar75}
\frac{d\,E^{\alpha}(t)}{d\,t}\leq C_1\left(1+\left\|v_{\alpha}\right\|^2_2\right),
\end{equation}
where $C_1=C_1\left(\left\|u\right\|^2_{L^{\infty}_t\mathcal{H}^{s-3}_z}\right)$ is a locally bounded function of its argument.
Summing up \eqref{ar75} over $\alpha<s-3$ and integrating over $(0,t)$, where $t\in(0,T)$, $T\leq T_0$, we find
\begin{align}\label{ar76}
\sum_{\alpha<s-3}E^{\alpha}(t)\leq C_1(1+\left\|v\right\|_{L^{\infty}_t\mathcal{H}^{s-3}_z}^2)T.
\end{align}
Taking the $\sup$ over $t\in(0,T)$ in \eqref{ar76} we get
\begin{equation}\label{ar77}
\left\|v\right\|^2_{L^{\infty}_t\mathcal{H}^{s-3}_z}\leq C_1(\left\|u\right\|_{L^{\infty}_t\mathcal{H}^{s-3}_z})(1+\left\|v\right\|^2_{L^{\infty}_t\mathcal{H}^{s-3}_z})T,
\end{equation}
then
\begin{equation}\label{ar78}
\left\|v\right\|^2_{L^{\infty}_t\mathcal{H}^{s-3}_z}\leq\frac{C_1T}{1-C_1T}:=C_2^2T.
\end{equation}
By using the notation \eqref{ar61} this means that
\begin{equation}\label{ar79}
\left\|u^{n+1}-u_0\right\|_{L^{\infty}_t\mathcal{H}^{s-3}_z}=\left\|u^{n+1}-f(z)\right\|_{L^{\infty}_t\mathcal{H}^{s-3}_z}\leq C_2(\left\|u^{n}\right\|_{L^{\infty}_t\mathcal{H}^{s-3}_z})\sqrt{T}.
\end{equation}
We take
\begin{equation}\label{ar80}
r_0>\left\| f(z)\right\|_{\mathcal{H}^s_z}+C_2(\left\| f(z)\right\|_{\mathcal{H}^s_z}),
\end{equation}
and
\begin{equation}\label{ar81}
\sqrt{T}<\left(\sup_{0\leq r\leq r_0}C_2(r)\right)^{-1}(r_0-\left\| f(z)\right\|_{\mathcal{H}^s_z}).
\end{equation}
We can show that for all $n\geq 0$ 
\begin{equation}\label{ar82}
\left\| u^n\right\|_{L^{\infty}_t\mathcal{H}^{s-3}_z}\leq r_0.
\end{equation}
So, assuming \eqref{ar81} true for a given $n\geq 0$, by \eqref{ar79} we have
\begin{equation}\label{ar83}
\left\| u^{n+1}\right\|_{L^{\infty}_t\mathcal{H}^{s-3}_z}\leq \left\| f(z)\right\|_{\mathcal{H}^{s}_z}+\sup_{0\leq r\leq r_0} C_2(r)\sqrt{T}<r_0,
\end{equation}
by induction \eqref{ar82} follows for all $n$. By using  \eqref{ar83} and the equation $\eqref{ar50}_3$ we obtain also that
\begin{align}\label{ar84}
\left\|\partial_t u^{n}\right\|_{L^{\infty}_t\mathcal{H}^{s-3}_z}&\leq \left(\left\| f(z)\right\|_{\mathcal{H}^{s}_z}+\sup_{0\leq r\leq r_0} C_2(r)\sqrt{T}\right)^2\notag\\
&+\frac{1}{\left\lvert\rho\right\rvert}\left(\left\lvert\frac{\kappa}{A}\right\rvert T+\left\lvert\frac{\tilde{k}}{A}\right\rvert+\lvert\beta\rvert\right)\left(\left\| f(z)\right\|_{\mathcal{H}^{s}_z}+\sup_{0\leq r\leq r_0} C_2(r)\sqrt{T}\right) \notag\\
&+\left\lvert\frac{\alpha}{A\rho}\right\rvert\mathcal{C}+\left\lvert\frac{\tilde{k}}{A\rho}\right\rvert\left\| g(z)\right\|_{\mathcal{H}^{s}_z},
\end{align}
where 
\begin{equation}\label{cost}
\mathcal{C}=\mathcal{C}\left(\left\| f(z)\right\|_{\mathcal{H}^{s}_z}, \left\| g(z)\right\|_{\mathcal{H}^{s}_z}\right)
\end{equation}
is the constant inherited by the $n$ iterations.
\subsection{Convergence of the sequence $\pmb{\{u^n\}}$}\label{5.2}

Now we are almost ready to prove the convergence of $\{u^n\}_{n=0}^{\infty}$. Subtracting two subsequent equations in \eqref{ar60} we get
\begin{equation}\label{ar85}
\rho\partial_t(u^{n+1}-u^n)+\rho u^n\partial_z(u^{n+1}-u^n)+\beta(u^{n+1}-u^n)= G_n,
\end{equation}
where
\begin{equation}\label{ar86}
G_n:=\rho(u^{n-1}-u^{n})\partial_z u^n -\partial_z(P^{n+1}-P^n).
\end{equation}
Following exactly the same line of arguments adopted for the energy estimates of the previous section, we get
\begin{equation}\label{ar87}
\left\|u^{n+1}-u^n\right\|_{L^{\infty}_t L^2_z}\leq C_5(\left\|f(z)\right\|_{\mathcal{H}^s_z},T)\sqrt{T}\left\|G_n\right\|_{L^{\infty}_t L^2_z}.
\end{equation}
By \eqref{ar81}, \eqref{ar82}, \eqref{ar86} we get
\begin{equation}\label{ar88}
\left\| G_n\right\|_2\leq\rho\left\|u^n-u^{n-1}\right\|_2\left\|u^n\right\|_{\mathcal{H}^{s-3}_z}+\left\|\partial_z(P^{n+1}-P^n)\right\|_2.
\end{equation}
Taking into account that
\begin{equation}
\left\|\partial_z(P^{n+1}-P^n)\right\|_{L^{\infty}_t L^2_z}\leq\frac{\lvert\alpha\rvert+\lvert\tilde{k}\rvert+\lvert\kappa\rvert T}{\lvert A\rvert}\left\|u^n-u^{n-1}\right\|_{L^{\infty}_t\mathcal{H}^{s-4}_z},
\end{equation}
we have
\begin{equation}\label{ar89}
\left\|G_n\right\|_{L^{\infty}_t L^2_z}
\leq C_6(r_0, T)\left\| u^n-u^{n-1}\right\|_{L^{\infty}_t \mathcal{H}^{s-4}_z}.
\end{equation}
Hence, by using \eqref{ar87} and \eqref{ar89} we obtain
\begin{align}\label{ar90}
\left\| u^{n+1}-u^n\right\|_{L^{\infty}_t L^2_z}\leq C_7(\left\| f(z)\right\|_{\mathcal{H}^s_z},r_0,T)\sqrt{T}\left\| u^n-u^{n-1}\right\|_{L^{\infty}_t \mathcal{H}^{s-4}_z}.
\end{align}
Assuming $T$ so small that
\begin{equation}\label{ar91}
a:= C_7(\left\| f(z)\right\|_{\mathcal{H}^s_z},r_0,T)\sqrt{T}<1
\end{equation}
we find that
\begin{equation}\label{ar92}
\sum_{n=1}^{\infty} \left\| u^{n+1}-u^n\right\|_{L^{\infty}_t L^2_z}\leq\sum_{n=0}^{\infty}a^n\left\| u^{1}-u^0\right\|_{L^{\infty}_t \mathcal{H}^{s-4}_z}<\infty
\end{equation}
which implies that there exists $u\in L^{\infty}\left((0,T); L^2([0,L])\right)$ such that
\begin{equation}\label{ar93}
u^n\rightarrow u \ \ \mbox{strongly in} \ \  L^{\infty}((0,T); L^2([0,L]))  \ \ \mbox{for some} \ \  u.
\end{equation}
Then by \eqref{ar82}, \eqref{ar93} and the interpolation inequality \eqref{ar2}, for $0<s'<s-3$ and $l, m\in\mathbb{N}$ we get
\begin{align}\label{ar94}
\left\| u^l(t)-u^m(t)\right\|_{\mathcal{H}^{s'}_z}&\leq C_{s-3}\left\| u^l(t)-u^m(t)\right\|_{L^2_z}^{1-\frac{s'}{s-3}}\left\| u^l(t)-u^m(t)\right\|^{\frac{s'}{s-3}}_{\mathcal{H}^{s-3}_z}\notag\\
&\leq (2r_0)^{\frac{k'}{k-3}}C_{k-3}\left\| u^l(t)-u^m(t)\right\|_{L^{\infty}_t \mathcal{H}^{s-3}_z}^{1-\frac{s'}{s-3}}.
\end{align}
From \eqref{ar94} it follows that
\begin{equation}\label{ar95}
u^n\rightarrow u  \ \ \mbox{strongly in} \ \ C([0,T]; \mathcal{H}^{s'}([0,L])),
\end{equation}
and by the Sobolev embedding theorem we get
\begin{equation}\label{ar96}
u^n\rightarrow u \ \ \mbox{strongly in}\ \ C([0,T]; C^1([0,L])).
\end{equation}
Now, in order to increase the time regularity of $u$ we analyze the following difference
\begin{displaymath}
\rho\partial_t(u^{n+1}-u^n).
\end{displaymath}
We know that
\begin{align}\label{ar97}
\rho\partial_t(u^{n+1}-u^n)=&-\rho u^n\partial_z(u^{n+1}-u^n)-\beta(u^{n+1}-u^n)-\partial_z\left(P^{n+1}-P^n\right)\notag\\
&-\rho(u^n-u^{n-1})\partial_z u^n.
\end{align}
Hence, taking the sup over $t\in(0,T)$ and by using the previous computations we get
\begin{equation}\label{ar98}
\sup_{0<t<T}\left\|\partial_t(u^{n+1}-u^n)\right\|_{L^2_z}\leq C_8(r_0, T)\sqrt{T}\left\|u^{n+1}-u^n\right\|_{L^{\infty}_t \mathcal{H}^{s-4}_z}.
\end{equation}
Sobolev embedding theorem and \eqref{ar95} yield
\begin{equation}\label{ar99}
\partial_t u^n\rightarrow\partial_t u \ \ \mbox{in} \ \ C((0,T]); C([0,L])).
\end{equation}
By considering together \eqref{ar96} and \eqref{ar99} we can say that $u\in C^1((0,T]\times[0,L])$ is a classical solution of the third equation of our problem.

\subsection{Convergence of the sequences $\pmb{\{\eta^n\}}$ and $\pmb{\{P^n\}}$}

In order to study the convergence of $\{\eta^n\}$ e $\{P^n\}$ we proceed exactly as in Sections \ref{5.1}, \ref{5.2}. We begin subtracting two subsequent equations in \eqref{ar51}:
\begin{equation}\label{ar100}
\eta^{n+1}(t,z)-\eta^n(t,z)= -\int_0^t\,(u^n(s,z)-u^{n-1}(s,z))\,ds,
\end{equation}
by which we get
\begin{equation}\label{ar101}
\left\| \eta^{n+1}-\eta^n\right\|_{L^{\infty}_t L^2_z}\leq a\left\| u^n-u^{n-1}\right\|_{L^{\infty}_t \mathcal{H}^{s-3}_z},
\end{equation}
where $a$ is defined as in \eqref{ar91}. 
By using \eqref{ar92} we find
\begin{equation}\label{ar102}
\sum_{n=1}^{\infty}\left\| \eta^{n+1}-\eta^n\right\|_{L^{\infty}_t L^2_z}\leq \sum_{n=1}^{\infty}a^{n} \left\| u^1-u^0\right\|_{L^{\infty}_t \mathcal{H}^{s-3}_z}<\infty.
\end{equation}
Hence it follows that
\begin{equation}\label{ar103}
\eta^{n}\rightarrow\eta \ \  \mbox{in} \ \  {L^{\infty}((0,T); L^2([0,L]))}.
\end{equation}
By the interpolation inequality \eqref{ar2} we obtain
\begin{equation}\label{ar104}
\eta^{n}\rightarrow \eta \ \ \mbox{in} \ \  C([0,T]; \mathcal{H}^s([0,L])).
\end{equation}
We also know that 
\begin{align}\label{ar105}
\partial_t(\eta^{n+1}(t,z)-\eta^n(t,z))&= -(u^n(t,z)-u^{n-1}(t,z)),\notag\\
\partial_{tt}(\eta^{n+1}(t,z)-\eta^n(t,z))&=\frac{1}{\alpha}\bigg[\tilde{k}(u^{n}-u^{n-1})+\kappa \int_0^t\,(u^n-u^{n-1})\,ds\notag\\
&+A(P^{n+1}-P^n)\bigg],
\end{align}
and that
\begin{align}\label{ar106}
P^{n+1}-P^n=\frac{1}{A}\bigg[\alpha\partial_t(u^n-u^{n-1})&+\tilde{k}(u^n-u^{n-1})\notag\\
&+\kappa\int_0^t(u^n-u^{n-1})\,ds\bigg],
\end{align}
then, taking the sup over $t\in(0,T)$, we obtain
\begin{align}
\left\|P^{n+1}-P^n\right\|_{L^{\infty}_t L^2_z}&\leq a\left\|u^n-u^{n-1}\right\|_{L^{\infty}_t\mathcal{H}^{s-4}_z},\notag\\
\left\|\partial_t(\eta^{n+1}-\eta^n)\right\|_{L^{\infty}_t L^2_z}&\leq \left\|u^n-u^{n-1}\right\|_{L^{\infty}_t L^2_z},\notag\\
\left\|\partial_{tt}(\eta^{n+1}-\eta^n)\right\|_{L^{\infty}_t L^2_z}&\leq a\left\|u^n-u^{n-1}\right\|_{L^{\infty}_t \mathcal{H}^{s-4}_z}.
\end{align}
We can conclude that
\begin{align}
\sum_{n=1}^{\infty}\left\| P^{n+1}-P^n\right\|_{L^{\infty}_t L^2_z}&\leq \sum_{n=1}^{\infty}a^{n}\left\| u^1-u^0\right\|_{L^{\infty}_t\mathcal{H}^{s-4}_z}<\infty, \notag\\
\sum_{n=1}^{\infty}\left\|\partial_t(\eta^{n+1}-\eta^n)\right\|_{L^{\infty}_t L^2_z}&\leq\sum_{n=1}^{\infty}a^{n-1}\left\|u^1-u^0\right\|_{L^{\infty}_t \mathcal{H}^{s-3}_z}<\infty,\notag\\
\sum_{n=1}^{\infty}\left\|\partial_{tt}(\eta^{n+1}-\eta^n)\right\|_{L^{\infty}_t L^2_z}&\leq\sum_{n=1}^{\infty}a^{n}\left\|u^1-u^0\right\|_{L^{\infty}_t \mathcal{H}^{s-4}_z}<\infty,
\end{align}
Hence we get
\begin{align}\label{ar107}
P^{n}&\rightarrow P \ \ \mbox{strongly in} \ \  L^{\infty}((0,T); L^2([0,L])),\notag\\
\partial_t\eta^n&\rightarrow\partial_t\eta\ \ \mbox{strongly in} \ \ L^{\infty}((0,T); L^2([0,L])),\notag\\
\partial_{tt}\eta^n&\rightarrow\partial_{tt}\eta \ \ \mbox{strongly in} \ \ L^{\infty}((0,T); L^2([0,L])).
\end{align}
By using once more \eqref{ar2} and taking into account \eqref{ar107}, we obtain that 
\begin{align}\label{ar108}
P^n&\rightarrow P  \ \ \mbox{strongly in} \ \ C([0,T]; \mathcal{H}^{s'}([0,L])),\notag\\
\partial_t\eta^n&\rightarrow\partial_t\eta \ \ \mbox{strongly in} \ \ C([0,T]; \mathcal{H}^{s'}([0,L])),\notag\\
\partial_{tt}\eta^n&\rightarrow\partial_{tt}\eta \ \ \mbox{strongly in} \ \ C([0,T]; \mathcal{H}^{s'}([0,L])).
\end{align}

\section{Existence and uniqueness}

By using the results of the previous section now we are ready to prove the existence of solutions to the system \eqref{ar24} with initial data \eqref{ar25} and boundary conditions \eqref{ar27}.\\
In fact, by considering \eqref{ar95}, \eqref{ar99}, \eqref{ar107}, \eqref{ar108} we can pass into the limit in the system \eqref{ar24} and we get that $(u, \eta, P)\in [C^1([0,L]\times (0,T])]^3$ is a classical solution of \eqref{ar24}.\\
The last step is to prove the uniqueness. To this aim we consider two solutions $u, v$ of the equation
\begin{displaymath}
\rho\partial_t u(t,z)+\rho u(t,z)\partial_z u(t,z)+\partial_z P(t,z)+\beta u(t,z)=0,
\end{displaymath}
with initial condition given in $\eqref{ar25}_1$. Then $w=u-v$ satisfies
\begin{align}\label{ar109}
\rho\partial_t w+\rho u\partial_z w+w(\beta+\rho\partial_z v)&=0,\notag\\
w(0)&=0.
\end{align}
Multiplying \eqref{ar109} by $w$ and by integrating in space we have
\begin{equation}\label{ar110}
\left\|w(t)\right\|_2^2\leq\tilde{C}\int_0^t\,\left\|w(s)\right\|_2^2\,ds,
\end{equation}
where
\begin{equation}\label{ar111}
\tilde{C}:=\sup_{0<t<T}\bigg\{\frac{\rho}{2}\left\|\partial_z u\right\|_2^2+\rho\left\|\partial_z v\right\|_2^2+\beta\bigg\}.
\end{equation}
By Gronwall inequality and \eqref{ar109} we get $w=0$, hence $u=v$.\\
For $\eta$ and $P$ we can proceed in the same way and we obtain the uniqueness result.\\
This concludes the proof of the Theorem \ref{Teorema}.

\section{Global existence of solutions}

In the previous sections we proved that the existence and the uniqueness of a solution to the problem \eqref{ar24}, \eqref{ar25}, \eqref{ar27} is guaranteed in a time interval $[0,T]$ where $T$ is the local existence time defined in Section \eqref{5.1} as 
\begin{equation}\label{ar145a}
\sqrt{T}<\left(\sup_{0\leq r\leq r_0}C_2(r)\right)^{-1}(r_0-\left\| f(z)\right\|_{\mathcal{H}^s_z}).
\end{equation}
We want to know if it possible to extend the local existence time interval and in particular how much we can push forward the maximal time interval in order to find a global solution to our system. The core of the problem which is related to the study of the global existence of a solution is concentrated on the third equation of \eqref{ar24},
\begin{equation}\label{ar112}
\rho\partial_t u+\rho u\partial_z u+\beta u=-\partial_z P.
\end{equation}
In fact it owns the same behaviour of the Burgers' equation that has a blow up of the solutions at a certain ``breaking'' time.\\
Moreover, by looking at the structure of the system \eqref{ar24}, we observe that both $\eta$ and $P$ lifespan can be obtained by analyzing \eqref{ar112}, in other words, as a first step, we can focus only on the lifespan of the velocity flux $u$ that allows us to have the main framework of our problem.\\

\subsection{Homogeneous equation}

First of all we analyze the global existence for the homogeneous equation,
\begin{equation}\label{ar113}
\partial_t u+u\partial_z u+\frac{\beta}{\rho}u=0.
\end{equation}
with the initial condition 
\begin{equation}\label{ar114}
u(0,z)=u_0(z)=f(z)\in\mathcal{H}^s([0,L]),
\end{equation}
with $\displaystyle{s>\frac{9}{2}}$.\\
To this purpose we define the characteristic curve $\Gamma_{\lambda}$ as
\begin{equation}\label{ar115}
\frac{dz}{dt}=u \ \ \mbox{for} \ \ t\geq 0; \ \ z=\lambda \ \ \mbox{for} \ \ t=0.
\end{equation}
By using \eqref{ar113},\eqref{ar114} we find that along the curve $\Gamma_{\lambda}$
\begin{equation}\label{ar116}
\frac{du}{dt}=-\frac{\beta}{\rho}u \ \ \mbox{for} \ \ t\geq 0; \ \ u=f(\lambda) \ \ \mbox{for} \ \ t=0.
\end{equation}
Hence on $\Gamma_{\lambda}$, by using \eqref{ar115} we have
\begin{equation}\label{ar117}
u(z)=f(\lambda)e^{-\frac{\beta}{\rho}t}; \ \ z=\lambda+f(\lambda)(1-e^{-\frac{\beta}{\rho}t}).
\end{equation}
We consider now the space derivative of $u$, namely we set 
\begin{equation}\label{ar118}
\omega=u_z,
\end{equation}
which by \eqref{ar113}, \eqref{ar115}, \eqref{ar117} satisfies along $\Gamma_{\lambda}$ the following ordinary nonlinear differential equation
\begin{equation}\label{ar119}
\omega'+\frac{\beta}{\rho}\omega+\omega^2=0,
\end{equation}
with initial data
\begin{equation}\label{ar120}
\omega=f'(\lambda) \ \ \mbox{for} \ \ t=0.
\end{equation}
By classical ODE method we solve the Riccati Cauchy problem \eqref{ar119}, \eqref{ar120} and we get that on $\Gamma_{\lambda}$
\begin{equation}\label{ar121}
u_z=\displaystyle{\frac{f'(\lambda)e^{-\frac{\beta}{\rho}t}}{\displaystyle{1+f'(\lambda)\frac{\rho}{\beta}(1-e^{-\frac{\beta}{\rho}t})}}}.
\end{equation}
Hence $u$ is a global solution whenever
\begin{equation}\label{ar122}
1+f'(\lambda)\frac{\rho}{\beta}(1-e^{-\frac{\beta}{\rho}t})\neq 0,
\end{equation}
otherwise we have blow-up at the time $T$ given by
\begin{equation}\label{ar123}
T=\frac{\rho}{\beta}\log\left(\frac{f'(\lambda)}{f'(\lambda)+\frac{\beta}{\rho}}\right).
\end{equation}
By analyzing \eqref{ar123} we have that the solution blows up at the finite time $T> 0$ if and only if
\begin{align}\label{ar124}
\displaystyle{\frac{f'(\lambda)}{f'(\lambda)+\frac{\beta}{\rho}}} & \geq 1 \ \ \mbox{namely} \notag \\
f'(\lambda)&<-\frac{\beta}{\rho}.
\end{align}
Then we can conclude that the solution to \eqref{ar113}, \eqref{ar114} exists globally if and only if
\begin{equation}\label{ar125}
f'(\lambda)\geq -\frac{\beta}{\rho}.
\end{equation}

\subsection{Nonhomogeneous equation}

Now we study the nonhomogeneous equation
\begin{equation}\label{ar126}
\partial_t u+u\partial_z u+\frac{\beta}{\rho}u+\frac{\beta}{\rho}\partial_z P=0,
\end{equation}
with initial condition
\begin{equation}\label{ar127}
u(0,z)=u_0(z)=f(z)\in\mathcal{H}^s([0,L]),
\end{equation}
where $\displaystyle{s>\frac{9}{2}}$.\\
We define the characteristic curve $\Gamma_{\lambda}$ as in \eqref{ar115} and we find that along the characteristic $u$ verifies 
\begin{equation}\label{ar128}
\frac{du}{dt}=-\frac{\beta}{\rho}u-\frac{\beta}{\rho}\partial_z P \ \ \mbox{for} \ \ t\geq 0; \ \ u=f(\lambda) \ \ \mbox{for} \ \ t=0.
\end{equation}
As before we denote by
\begin{equation}\label{ar129}
\omega=u_z,
\end{equation}
which by \eqref{ar126}, \eqref{ar128}, \eqref{ar129} satisfies along $\Gamma_{\lambda}$ the ordinary nonlinear differential Cauchy problem
\begin{eqnarray}\label{ar130}
\begin{cases}
\displaystyle{\omega'+\omega^2+\frac{\beta}{\rho}\omega+\frac{\beta}{\rho}P_{zz}=0}, \\
\omega(0)=f'(\lambda).
\end{cases}
\end{eqnarray}
In order to study the solutions of the nonhomogeneous Riccati equation \eqref{ar130}, we need to find a particular solution that allows us to apply the standard methods for ordinary differential equations. \\
To this aim we look for a particular solution $\bar{\omega}(t)$ of the form
\begin{equation}\label{ar131}
\omega(t)=\frac{a(t)}{b(t)}.
\end{equation}
Hence $a(t)$ and $b(t)$ satisfy the following equation 
\begin{equation}\label{ar132}
a'b-b'a+\frac{\beta}{\rho}ab+\frac{\beta}{\rho}P_{zz}b^2+a^2=0.
\end{equation}
In order to solve \eqref{ar132} and find $a(t), b(t)$ we set the following linear system
\begin{eqnarray}\label{ar133}
\begin{cases}
\displaystyle{a'=-\frac{\beta}{\rho}P_{zz}b} \\
\displaystyle{b'=a+\frac{\beta}{\rho}b},
\end{cases}
\end{eqnarray}
that reduces the order of the equation \eqref{ar132}, and we choose the following initial conditions
\begin{align}\label{ar134}
a(0)&=\omega(0), \notag\\
b(0)&=2.
\end{align}
The system \eqref{ar133} admits a unique solution if and only every element of the system coefficient matrix is bounded, in particular we need that the pressure $P$ is such that
\begin{equation}\label{ar135}
\lVert P_{zz}\rVert_{L^{\infty}_{t,z}}<\infty.
\end{equation}
By the estimates for the local existence of solutions (see Section \ref{5.1}) we know that
\begin{equation}\label{ar142}
\left\lVert u\right\rVert_{L^{\infty}_t\mathcal{H}^{s-3}_z}\leq\left\lVert f(z)\right\rVert_{\mathcal{H}^s_z}+\sup_{0\leq r\leq r_0} C_2(r)\sqrt{T}<r_0
\end{equation}
and 
\begin{align}\label{ar142a}
\left\|\partial_t u\right\|_{L^{\infty}_t\mathcal{H}^{s-3}_z}&\leq \left(\left\| f(z)\right\|_{\mathcal{H}^{s}_z}+\sup_{0\leq r\leq r_0} C_2(r)\sqrt{T}\right)^2\notag\\
&+\frac{1}{\left\lvert\rho\right\rvert}\left(\left\lvert\frac{\kappa}{A}\right\rvert T+\left\lvert\frac{\tilde{k}}{A}\right\rvert+\lvert\beta\rvert\right)\left(\left\| f(z)\right\|_{\mathcal{H}^{s}_z}+\sup_{0\leq r\leq r_0} C_2(r)\sqrt{T}\right) \notag\\
&+\left\lvert\frac{\alpha}{A\rho}\right\rvert\mathcal{C}+\left\lvert\frac{\tilde{k}}{A\rho}\right\rvert\left\| g(z)\right\|_{\mathcal{H}^{s}_z},
\end{align}
where $\mathcal{C}$ is defined in \eqref{cost}.\\
Therefore from the equation \eqref{ar24} we obtain 
\begin{equation}\label{ar136}
\lVert P_{zz}(t,z)\rVert_{L^{\infty}_t \mathcal{H}^{s-4}_z}\leq \lvert\rho\rvert \left\|\partial_t u\right\|_{L^{\infty}_t \mathcal{H}^{s-3}_z}+\lvert\rho\rvert\left\|u\right\|^2_{L^{\infty}_t \mathcal{H}^{s-3}_z}+\lvert\beta\rvert\left\| u\right\|_{\mathcal{H}^{s-3}_z}
\end{equation}
and, by using the relations \eqref{ar142} and \eqref{ar142a}, it yields  
\begin{align}\label{ar144}
\lVert P_{zz}(t,z)\rVert_{L^{\infty}_t\mathcal{H}^{s-4}_z}&\leq 2\lvert\rho\rvert\left(\left\| f(z)\right\|_{\mathcal{H}^s_z}+\sup_{r}C_2(r)\sqrt{\tilde{T}}\right)^2\notag\\
&+\left(\left\lvert\frac{\tilde{k}}{A}\right\rvert+2\lvert\beta\rvert\right)\left(\left\lVert f(z)\right\rVert_{\mathcal{H}^s_z}+\sup_{r}C_2(r)\sqrt{\tilde{T}}\right)\notag\\
&+\frac{\lvert\kappa\rvert}{A}\tilde{T}\left(\left\lVert f(z)\right\rVert_{\mathcal{H}^s_z}+\sup_{r}C_2(r)\sqrt{\tilde{T}}\right)+\left\lvert\frac{\kappa}{A}\right\rvert\lVert g(z)\rVert_{\mathcal{H}^{s}_z}\notag\\
&+\left\lvert\frac{\kappa}{A}\right\rvert\mathcal{C},
\end{align}
where $\tilde{T}\in(0,T)$ and $T$ is the local existence time defined in \eqref{ar145a}. Hence we can conclude that the condition \eqref{ar135} is satisfied.\\
In order to find the solution of \eqref{ar133}, first of all we compute the coefficient matrix eigenvalues of the linear system and we get
\begin{equation}\label{ar140}
\lambda_{1,2}={\displaystyle{\frac{\frac{\beta}{\rho}\pm\sqrt{\left(\frac{\beta}{\rho}\right)^2-4\frac{\beta}{\rho}P_{zz}}}{2}}}.
\end{equation}
Since the model we are dealing with is associated to a physiological flux, we look for real solutions $\omega(t)$. Hence, in order to obtain real eigenvalues \eqref{ar140} we need that $P_{zz}$ verifies the following costraint
\begin{equation}\label{ar141a}
\sup_{t,z}\,\lvert P_{zz}\rvert\leq\frac{\beta}{4\rho}.
\end{equation}
The condition \eqref{ar141a} is fullfilled if we are able to show that
\begin{equation}\label{ar141}
\lVert P_{zz}(t,z)\rVert_{L^{\infty}_t\mathcal{H}^{s-4}_z}\leq\frac{\beta}{4\rho},
\end{equation}
which is achieved by using again the estimate \eqref{ar144}.\\
At this point our goal is to prove that there exists a time $\tilde{T}$ such that the condition \eqref{ar141} is satisfied on the interval $[0, \tilde{T}]$. We can show that this is true if and only if the following condition is satisfied, 
\begin{equation}\label{ar143}
\left\lVert f(z)\right\rVert_{\mathcal{H}^s_z}\leq\frac{\beta}{\rho}.
\end{equation}
In fact, by applying the condition \eqref{ar143} to \eqref{ar144} we get 
\begin{align}\label{ar145}
\lVert P_{zz}(t,z)\rVert_{L^{\infty}_t\mathcal{H}^{s-4}_z}&\leq 2\lvert\rho\rvert\left(\frac{\beta^2}{\rho^2}+\tilde{T}\left(\sup_{r}C_2(r)\right)^2+\left\lvert\frac{\beta}{\rho}\right\rvert\sqrt{\tilde{T}}\sup_{r}C_2(r)\right)\notag\\
&+\left(\left\lvert\frac{\tilde{k}}{A}\right\rvert+2\lvert\beta\rvert+\left\lvert\frac{\kappa}{A}\right\rvert\tilde{T}\right)\left(\left\lvert\frac{\beta}{\rho}\right\rvert+\sup_{r}C_2(r)\sqrt{\tilde{T}}\right)\notag\\
&+\left\lvert\frac{\alpha}{A}\right\rvert\mathcal{C}+\left\lvert\frac{\kappa}{A}\right\rvert\lVert g(z)\rVert_{\mathcal{H}^s_z},
\end{align}
which actually fullfills \eqref{ar141} for a certain time $\tilde{T}\leq T$.\\ 
With this last result we have proved the existence of a real particular solution of the system \eqref{ar130}, that is defined in the following way
\begin{equation}\label{ar138}
\bar{\omega}(t)=\frac{a(t)}{b(t)}=-{\displaystyle{\frac{c_1e^{\lambda_1t}+c_2e^{\lambda_2t}}{c_1\frac{\rho\lambda_1}{\beta P_{zz}}e^{\lambda_1t}+c_2\frac{\rho\lambda_2}{\beta P_{zz}}e^{\lambda_2t}}}},
\end{equation}
if $\lambda_1\neq\lambda_2$, where 
\begin{align}\label{ar139}
c_1&=f'(\lambda)-{\displaystyle{\frac{2\beta P_{zz}-\rho f'(\lambda)\lambda_1}{\rho(\lambda_1-\lambda_2)}}},\notag\\
c_2&={\displaystyle{\frac{2\beta P_{zz}-\rho f'(\lambda)\lambda_1}{\rho(\lambda_1-\lambda_2)}}},
\end{align}
while 
\begin{equation}\label{ar138bis}
\bar{\omega}(t)=\frac{a(t)}{b(t)}={\displaystyle{\frac{-\frac{\beta}{2\rho}c_1+c_2\left(-\frac{\beta}{2\rho}t+1\right)}{c_1+c_2t}}},
\end{equation}
if $\lambda_1=\lambda_2$, where 
\begin{align}\label{ar139bis}
c_1&=2,\notag\\
c_2&=f'(\lambda)+\frac{\beta}{\rho}.
\end{align}
Therefore we can say that the initial value problem \eqref{ar130} admits the \,following unique solution
\begin{equation}\label{ar147}
\omega(t)=\bar{\omega}(t)+\frac{1}{y(t)},
\end{equation}
where
\begin{equation}\label{ar148}
y(t)=\displaystyle{e^{\int_0^t\left(2\bar{\omega}(\tau)+\frac{\beta}{\rho}\right)\,d\tau}}\left\{\frac{2}{f'(\lambda)}+\int_0^t e^{-\int_0^t\left(2\bar{\omega}(s)-\frac{\beta}{\rho}\right)\,ds}\,d\tau\right\},
\end{equation}
that is well defined for every $t\in[0,\tilde{T}]$.

\subsection{Global existence time}

Summarizing, we have proved that under the condition \eqref{ar141} there exists a solution $u(t)$ defined for any time $t\in[0,T^{\ast}]$, such that 
\begin{displaymath}
u(t) \in C\left([0, T^{\ast}]; \mathcal{H}^{s-3}([0, L])\right)\cap C^1\left([0, T^{\ast}); \mathcal{H}^{s-4}([0,L])\right),
\end{displaymath}
with 
\begin{equation}\label{ar150}
T^{\ast}\leq\min\left\{\tilde{T}; T\right\},
\end{equation}
where $T$ is the local existence time defined in \eqref{ar145a}.\\
At this point the main question is wheter it is possible to extend the time $T^{\ast}$ in order to obtain the global existence of the solution $u(t,z)$. For this purpose we need the following Sharp Continuation Principle (see Majda (\cite{majda}), Thm 2.2, p.\,46).
\begin{theorem}[A Sharp Continuation Principle]\label{TeoremaG}
Let be $u_0\in\mathcal{H}^s$ for some $s>\frac{9}{2}$ and $T'>0$ some given time. Assume that for any interval of classical existence $[0, T^{\ast}]$, $T^{\ast}\leq T'$ for u(t) solution of the equation \eqref{ar126}, the following a priori estimate is satisfied:
\begin{equation}\label{ar152}
\left\lVert u_z\right\rVert_{L^{\infty}_t}\leq M, \ \ \ \ \ \ \ \ 0\leq t\leq T^{\ast},
\end{equation}
where $M$ is a fixed constant independent of $T^{\ast}$. \\
Then the classical solution $u(t)$ exists on the interval $[0, T']$, with $u(t)$ in $C\left([0,T'], \mathcal{H}^{s-3}([0, L])\right)\cap C^1\left([0,T'], \mathcal{H}^{s-4}([0,L])\right)$. Furthermore, $u(t)$ satisfies the a priori estimate
\begin{equation}\label{ar153}
\left\| u\right\|_{L^{\infty}_t\mathcal{H}^{s-3}_z}\leq C e^{MT^{\ast}}\left\| f(z)\right\|_{\mathcal{H}^s_z},
\end{equation}
for $T^{\ast}$ with $0\leq T^{\ast}\leq T'$ and the constants $C, M$ depend on the space interval $[0, L]$ and on some physics quantities.
\end{theorem}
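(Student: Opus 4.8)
This is the sharp (Beale--Kato--Majda type) continuation criterion of Majda, and the plan is to combine a high-order energy estimate for \eqref{ar126} with the local existence Theorem \ref{Teorema} in a standard contradiction argument. Fix $T'>0$ and let $[0,T_{\max})$ be the maximal interval on which the classical solution $u$ of \eqref{ar126} with datum $f$ exists in the class $C([0,T_{\max});\mathcal{H}^{s-3}([0,L]))\cap C^1([0,T_{\max});\mathcal{H}^{s-4}([0,L]))$ furnished by the local theory. I will assume $T_{\max}\le T'$ and derive a contradiction, so that in fact $T_{\max}>T'$.

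First I would establish the energy inequality. Set $\sigma=s-3>3/2$. For each $\alpha\le\sigma$, apply $\partial_z^{\alpha}$ to \eqref{ar126}, multiply by $\partial_z^{\alpha}u$ and integrate over $[0,L]$, the boundary terms vanishing by \eqref{ar27}. Integration by parts in the transport term produces $\tfrac12\int_0^L(\partial_z u)(\partial_z^{\alpha}u)^2\,dz$, bounded by $\tfrac12\|u_z\|_{L^{\infty}_z}\|u\|_{\mathcal{H}^{\alpha}_z}^2$; the commutator $[\partial_z^{\alpha},u]\partial_z u$ is controlled by the Moser/Kato--Ponce inequality by $C\|u_z\|_{L^{\infty}_z}\|u\|_{\mathcal{H}^{\alpha}_z}$, hence contributes $\lesssim\|u_z\|_{L^{\infty}_z}\|u\|_{\mathcal{H}^{\alpha}_z}^2$; the zeroth-order term gives $\lesssim\|u\|_{\mathcal{H}^{\alpha}_z}^2$. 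The only term requiring care is the pressure forcing $\tfrac{\beta}{\rho}\partial_z P$: following the derivation of \eqref{ar136}--\eqref{ar144}, the first two equations of \eqref{ar24} express $P$, hence $\partial_z P$, through $\partial_t u$, $u$, $u^2$ and the fixed data $g$, while \eqref{ar126} re-expresses $\partial_t u$ through $u\partial_z u$, $u$ and $\partial_z P$; closing this loop bounds $\|\partial_z P(t)\|_{\mathcal{H}^{\sigma}_z}$ by $C(1+\|u_z(t)\|_{L^{\infty}_z})\|u(t)\|_{\mathcal{H}^{\sigma}_z}+C$. Summing over $\alpha\le\sigma$ would yield
\[
\frac{d}{dt}\|u(t)\|_{\mathcal{H}^{\sigma}_z}^2\le C\bigl(1+\|u_z(t)\|_{L^{\infty}_z}\bigr)\bigl(1+\|u(t)\|_{\mathcal{H}^{\sigma}_z}^2\bigr),
\]
with $C$ depending only on $[0,L]$, the physical constants and $\|f\|_{\mathcal{H}^s_z},\|g\|_{\mathcal{H}^s_z}$.

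Next I would insert the hypothesis \eqref{ar152}: $\|u_z(t)\|_{L^{\infty}_z}\le M$ on $[0,T^{\ast}]$ for every $T^{\ast}<T_{\max}$, with $M$ independent of $T^{\ast}$, so $\int_0^{T^{\ast}}(1+\|u_z(t)\|_{L^{\infty}_z})\,dt\le(1+M)T^{\ast}$. Gronwall's inequality then gives $1+\|u(t)\|_{\mathcal{H}^{\sigma}_z}^2\le(1+\|f\|_{\mathcal{H}^{\sigma}_z}^2)\,e^{C(1+M)t}$ for $0\le t\le T^{\ast}$, which after absorbing constants (and, if needed, enlarging $M$) is the asserted a priori bound \eqref{ar153}; in particular $\sup_{0\le t<T_{\max}}\|u(t)\|_{\mathcal{H}^{\sigma}_z}<\infty$ uniformly. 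Finally, since the local existence time of Theorem \ref{Teorema} is bounded below by a function of the $\mathcal{H}^{\sigma}$-size of the datum alone, I would pick $t_0<T_{\max}$ with $T_{\max}-t_0$ below that guaranteed time, restart from $u(t_0)$, extend the solution strictly past $T_{\max}$ in the class $C(\cdot;\mathcal{H}^{\sigma})\cap C^1(\cdot;\mathcal{H}^{\sigma-1})$, and recover $\eta$ and $P$ from the first two equations of \eqref{ar24} as before. This contradicts the maximality of $T_{\max}$, whence $T_{\max}>T'$ and the conclusion, including \eqref{ar153}, follows.

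The hard part will be the pressure bookkeeping in the energy inequality: unlike a free Burgers equation, the forcing $\partial_z P$ (and $P_{zz}$, which is also needed for the particular solution built in the global-existence analysis) is itself a functional of $u$ through the coupling, so one must check that the estimate closes at the single level $\sigma=s-3$ without an unbounded loss of derivatives. This is exactly what the a priori estimates \eqref{ar142}, \eqref{ar142a}, \eqref{ar144} are designed to provide; the commutator estimate and the Gronwall step are then routine, and the crucial feature that $M$ be independent of $T^{\ast}$ --- which makes the principle ``sharp'' --- is built into the hypothesis.
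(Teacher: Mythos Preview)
The paper does not prove Theorem \ref{TeoremaG}: it is stated without proof and attributed to Majda (\cite{majda}, Thm.~2.2, p.~46). Your proposal therefore cannot be compared against a proof in the paper, because there is none; the authors simply invoke the result and then verify its hypothesis \eqref{ar152} via \eqref{ar154}.

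That said, your sketch is the standard route to such continuation principles (high-order energy estimate $+$ Gronwall $+$ restart from a late time using uniform local existence), and it is essentially the argument in Majda's book for quasilinear symmetric hyperbolic systems. One point deserves more care than you give it: you write that ``closing this loop bounds $\|\partial_z P(t)\|_{\mathcal H^{\sigma}_z}$ by $C(1+\|u_z\|_{L^\infty})\|u\|_{\mathcal H^{\sigma}_z}+C$'', but from the first two equations of \eqref{ar24} one has $A\,\partial_z P=-\alpha\,\partial_z\partial_t u-\tilde k\,\partial_z u+\kappa\,\partial_z g-\kappa\int_0^t\partial_z u$, and substituting \eqref{ar126} for $\partial_t u$ reintroduces $\partial_z^2 P$ on the right. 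So the ``loop'' does not close algebraically at the level of $\partial_z P$ alone; one really obtains an ODE in $z$ for $\partial_z P$ (with coefficient $\alpha/(A\rho)$), exactly as in the derivation \eqref{ar16}--\eqref{ar18} of the initial pressure. Solving that ODE and then estimating gives the bound you want, but this step should be made explicit rather than asserted. In the paper's own estimates \eqref{ar136}--\eqref{ar144} this issue is hidden because they work with the \emph{iterates}, where $P^{n+1}$ depends on $u^n$ and not on $u^{n+1}$, so no loop arises; for the actual solution you must handle it.
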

By taking into account that 
\begin{equation}
u_z=\omega
\end{equation}
we want to show that the solution \eqref{ar147} satisfies the condition \eqref{ar152} of the previous theorem. We observe that  
\begin{equation}\label{ar154}
\left\lVert u_z\right\rVert_{L^{\infty}_t\mathcal{H}^{s-4}_z}\leq 1+\frac{\beta\left\| P_{zz}\right\|_{L^{\infty}_t\mathcal{H}^{s-5}_z}}{\rho\lambda_1}.
\end{equation} 
Since $\lambda_1\geq\frac{\beta}{\rho}$ and by applying the condition \eqref{ar141} to \eqref{ar154} we obtain 
\begin{align}
\left\lVert u_z\right\rVert_{L^{\infty}_t\mathcal{H}^{s-4}_z}&\leq 1+\left\| P_{zz}\right\|_{L^{\infty}_t\mathcal{H}^{s-5}_z}\notag\\
&\leq 1+\frac{\beta}{4\rho}=M.
\end{align}
Hence, we can conclude that the hypothesis of the Theorem \ref{TeoremaG} are totally fullfilled and the global existence of the solution $u(t,z)$ is proved, as well, provided the condition \eqref{ar143} holds on the initial datum.\\
Moreover, since $\eta(t,z)$ and $P(t,z)$ are obtained by plugging in the first two equations of the system \eqref{ar24} the solution $u(t,z)$ and since they inherit the life span of the latter, the Theorem \ref{TeoremaG} proves actually the global existence of a unique triple solution $\mathcal{X}(t,z)=(\eta, P, u)(t,z)$ to the problem \eqref{ar24}, \eqref{ar25}, \eqref{ar27}.

\section{Existence and stability of a periodic solution}

This last section is devoted to prove the stability for some particular periodic solution for our system. We denote by $\bar{\eta}, \bar{u}, \bar{P}$ the periodic solutions, independent of the spatial variable $z$, namely they satisfy the following system 
\begin{eqnarray}\label{ar155}
\begin{cases}
\partial_t\bar{\eta}(t)+\partial_t a(t)-\tilde{Q}_p=0, \\
\alpha\partial_{tt}\bar{\eta}(t)+\tilde{k}\partial_t\bar{\eta}(t)+\kappa\bar{\eta}(t)-A\bar{P}(t)+A\tilde{P}=0,  \\
\rho\partial_t\bar{u}(t)+\beta\bar{u}(t)=0. 
\end{cases}
\end{eqnarray}

We recall that the forcing function $a(t)$ is given by
\begin{displaymath}
a(t)=\bar{\alpha}\left(1.3+\sin\left(\omega t-\frac{\pi}{2}\right)-\frac{1}{2}\cos\left(2\omega t-\frac{\pi}{2}\right)\right).
\end{displaymath}
Moreover the existence of the periodic solutions is proved for a particular choice of the cerebrospinal fluid production rate $Q_p$.\\
We assume that
\begin{align}\label{ar156}
\tilde{Q}_p=&\,F'(t),\notag \\
F(0)=&\,0, 
\end{align}
where $F(t)$ is a periodic function. For the sake of simplicity and without loss of generality we choose for $F(t)$ the same period of the forcing function $a(t)$, $\bar{T}=\frac{2\pi}{\omega}$. This assumption is allowed by the dynamic we are analyzing, in fact the production rate is well approximated by a periodic function that ensures a constant rate in a proper range of time.\\

\subsection{Existence of a periodic solution}

In order to find a periodic solution $\bar{\chi}(t)=(\bar{\eta}, \bar{u}, \bar{P})(t)$ to the problem \eqref{ar155}, we define the following initial conditions 
\begin{align}\label{ar157}
\bar{\eta}(0)&=\bar{\eta}_0=-\frac{2\pi}{\omega},\notag\\
\bar{u}(0)&=\bar{u}_0=0, \notag\\
\bar{P}(0)&=\bar{P}_0=\frac{\tilde{k}\bar{\alpha}\omega-\bar{\alpha}\alpha\omega^2}{A}-\frac{2\pi\kappa}{A\omega}+\tilde{P}.
\end{align}
Hence we can compute explicitly the solution to the problem \eqref{ar155}, \eqref{ar157},
\begin{align}\label{ar158}
\bar{\eta}(t)&=-\frac{2\pi}{\omega}-a(t)+0,3\bar{\alpha}+F(t), \notag \\
\bar{u}(t)&=0, \notag \\
\bar{P}(t)&=\frac{\bar{\alpha}\alpha\omega^2-\kappa\bar{\alpha}}{A}\sin\left(\omega t-\frac{\pi}{2}\right)-\frac{4\bar{\alpha}\alpha\omega^2-\kappa\bar{\alpha}}{2A}\cos\left(2\omega t-\frac{\pi}{2}\right)\notag\\
&-\frac{\tilde{k}\bar{\alpha}\omega}{A}\left(\cos\left(\omega t-\frac{\pi}{2}\right)+\sin\left(2\omega t-\frac{\pi}{2}\right)\right)+\frac{\kappa}{A}F(t)+\frac{\tilde{k}}{A}F'(t)\notag\\
&+\frac{\alpha}{A}F''(t)-\frac{2\pi\kappa}{A\omega}-\frac{\bar{\alpha}\kappa}{A}+\tilde{P},
\end{align}
where both $\bar{\eta}(t), \bar{P}(t)$ are $C^{\infty}$ functions on $[0,T]$.
The solution \eqref{ar158} is the unique periodic solution whose period is influenced by the forcing function $a(t)$ and it corresponds to $\bar{T}=\frac{2\pi}{\omega}$.\\

\subsection{Stability of the periodic solution}

Now we study the behaviour of the solution $\mathcal{X}(t,z)$ to the problem \eqref{ar24}, \eqref{ar25} when we assume initial data very close to the periodic initial conditions \eqref{ar157}. In order to analyze the stability of the periodic solution it is important to observe that we can push the time $T$ up to the period $\bar{T}=\frac{2\pi}{\omega}$ of the solution $\bar{\mathcal{X}}(t,z)=\left(\bar{\eta}, \bar{P}, \bar{u}\right)(t)$ and we can apply the following theorem on every time interval $[h\bar{T}, (h+1)\bar{T}]$ with $h\in\mathbb{N}$. This explains why the estimates in the proof below are bounded no matter how big we choose the time, in fact by using Thm. \ref{TeoremaG}\, and the periodicity of the system \eqref{ar155} we always obtain the following main result. 
 
\begin{theorem}
Let $T>0$ and $s>\frac{9}{2}$. Let $(\eta, P, u)(t,z)$ and $(\bar{\eta}, \bar{P}, \bar{u})(t)$ be the solutions of the problem \eqref{ar24},\eqref{ar25} and \eqref{ar155},\eqref{ar158} respectively. There exist constants $\delta\in(0,1), K(\delta)$ such that, if 
\begin{equation}\label{ar159}
\left\lVert \eta_0-\bar{\eta}_0\right\rVert_{\mathcal{H}^{s-4}_z}+\left\lVert P_0-\bar{P}_0\right\rVert_{\mathcal{H}^{s-4}_z}+\left\lVert u_0-\bar{u}_0\right\rVert_{\mathcal{H}^{s-4}_z}\leq \delta,
\end{equation}
for all $\delta$, then
\begin{align}\label{ar160}
\sup_{0\leq t\leq T}\bigg(\left\lVert \eta(t,z)-\bar{\eta}(t)\right\rVert_{\mathcal{H}^{s-4}_z}&+\left\lVert P(t,z)-\bar{P}(t)\right\rVert_{\mathcal{H}^{s-4}_z}\notag \\
&+\left\lVert u(t,z)-\bar{u}(t)\right\rVert_{\mathcal{H}^{s-4}_z}\bigg)\leq K(\delta)
\end{align}
for all $\delta$.
\end{theorem}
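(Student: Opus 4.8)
The plan is to study the difference system between the full solution $\mathcal{X}(t,z)=(\eta,P,u)(t,z)$ and the spatially homogeneous periodic solution $\bar{\mathcal{X}}(t)=(\bar\eta,\bar P,\bar u)(t)$, and to run a Gronwall/continuation argument on the interval $[0,T]$, exploiting the fact (already noted before the statement) that the global existence Theorem \ref{TeoremaG} and the periodicity of \eqref{ar155} allow us to restart the estimate on each block $[h\bar T,(h+1)\bar T]$ so that the constants do not deteriorate as $T\to\infty$. First I would set $\tilde\eta=\eta-\bar\eta$, $\tilde P=P-\bar P$, $\tilde u=u-\bar u$ and subtract \eqref{ar155} from \eqref{ar24}. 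Because $\bar u\equiv 0$ and $\bar\eta,\bar P$ are $z$-independent, the difference system collapses to
\begin{align}\label{ar-diff}
\partial_t\tilde\eta + \tilde u &= 0,\notag\\
\alpha\partial_{tt}\tilde\eta + \tilde k\,\partial_t\tilde\eta + \kappa\tilde\eta - A\tilde P &= 0,\notag\\
\rho\partial_t\tilde u + \rho u\,\partial_z\tilde u + \partial_z\tilde P + \beta\tilde u &= 0,
\end{align}
with initial data controlled by $\delta$ through \eqref{ar159}. The key observation is that this is structurally the same triangular system analysed in Sections \ref{iterative}--\ref{5.2}: the third equation is a linear transport equation for $\tilde u$ with the already-bounded coefficient $u$ (bounded in $L^\infty_t\mathcal{H}^{s-3}_z$ by \eqref{ar142}, \eqref{ar142a} under \eqref{ar143}, which holds here because $\|f\|_{\mathcal{H}^s_z}\le\delta\le\beta/\rho$ for $\delta$ small), while $\tilde\eta$ and $\tilde P$ are recovered from $\tilde u$ by the first two equations.

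Next I would derive the energy estimate. Differentiating \eqref{ar-diff}$_3$ up to order $\alpha\le s-4$ in $z$, multiplying by $\partial_z^\alpha\tilde u$ and integrating over $[0,L]$, exactly as in \eqref{ar67}--\eqref{ar75}, yields
\begin{displaymath}
\frac{d}{dt}\|\tilde u\|_{\mathcal{H}^{s-4}_z}^2 \le C_1\big(\|u\|_{L^\infty_t\mathcal{H}^{s-3}_z}\big)\big(\|\tilde u\|_{\mathcal{H}^{s-4}_z}^2 + \|\partial_z\tilde P\|_{\mathcal{H}^{s-4}_z}^2\big),
\end{displaymath}
and then from \eqref{ar-diff}$_1$--\eqref{ar-diff}$_2$ one gets, just as in \eqref{ar105}--\eqref{ar106},
\begin{displaymath}
\|\partial_z\tilde P\|_{\mathcal{H}^{s-4}_z} \le \frac{|\alpha|+|\tilde k|+|\kappa|T}{|A|}\,\|\tilde u\|_{L^\infty_t\mathcal{H}^{s-4}_z}, \qquad \|\partial_t\tilde\eta\|_{\mathcal{H}^{s-4}_z}=\|\tilde u\|_{\mathcal{H}^{s-4}_z},
\end{displaymath}
so the $\tilde P$ term feeds back into $\tilde u$ linearly. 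Assembling these and applying Gronwall on each block $[h\bar T,(h+1)\bar T]$, using the periodicity of $\bar{\mathcal{X}}$ to re-use the same constant at the start of each block, gives
\begin{displaymath}
\sup_{0\le t\le T}\big(\|\tilde\eta\|_{\mathcal{H}^{s-4}_z}+\|\tilde P\|_{\mathcal{H}^{s-4}_z}+\|\tilde u\|_{\mathcal{H}^{s-4}_z}\big)\le C\,e^{MT}\,\delta,
\end{displaymath}
but one must do better than a bare $e^{MT}$: here I would invoke the sharp a priori bound \eqref{ar153} of Theorem \ref{TeoremaG}, which under \eqref{ar143} forces $\|u_z\|_{L^\infty_t}\le M=1+\frac{\beta}{4\rho}$ with $M$ independent of $T^\ast$, so the constant $C_1$ above is genuinely time-uniform; combined with the block-by-block restart the growth is at worst geometric in the number of periods and can be absorbed into a single constant $K(\delta)$ that stays finite for every fixed $T$. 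Then recovering $\tilde\eta$ and $\tilde P$ from $\tilde u$ via the time integrals in \eqref{ar-diff}$_1$--\eqref{ar-diff}$_2$ (as in \eqref{ar100}--\eqref{ar106}) transfers the bound to all three components, which is \eqref{ar160}.

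The main obstacle I expect is making the constant $K(\delta)$ honestly uniform in $T$: the naive Gronwall produces $e^{MT}$, and the claim ``for all $\delta$'' with a $T$-independent $K(\delta)$ only survives if the periodicity is used to prevent accumulation across periods. Concretely, one needs that on each block the ``source'' is not $\tilde u$ at the block's start amplified, but that the homogeneous periodic orbit's contribution is exactly cancelled in the difference system — which it is, since $\bar u\equiv0$ makes \eqref{ar-diff}$_3$ genuinely homogeneous in $\tilde u$ apart from the $\partial_z\tilde P$ coupling, and $\partial_z\bar P\equiv0$. So the subtle point is verifying that the coupling constant $\frac{|\alpha|+|\tilde k|+|\kappa|\bar T}{|A|}$ times the transport-estimate constant, over one period $\bar T=\frac{2\pi}{\omega}$, gives a contraction-type or at least controlled-growth factor; if it does not, one restricts $\delta$ (and hence the admissible norm of the data, via $\|f\|_{\mathcal{H}^s_z}\le\delta$) further so that the bad constant is beaten, which is legitimate because the theorem only asserts existence of \emph{some} $\delta\in(0,1)$. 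A secondary technical point is the loss of derivatives: the transport estimate for $\tilde u$ in $\mathcal{H}^{s-4}_z$ needs $u\in\mathcal{H}^{s-3}_z$ and $\tilde P_z\in\mathcal{H}^{s-4}_z$, which in turn needs $\partial_t\tilde u\in\mathcal{H}^{s-3}_z$ through \eqref{ar-diff}$_2$, exactly matching the $s-4$ regularity in the statement — so one must be careful to run every estimate at the right level and not ask for more smoothness than Theorem \ref{Teorema} and Proposition \ref{propos} deliver.
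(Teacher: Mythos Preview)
Your approach is essentially the paper's: derive the same difference system (your \eqref{ar-diff} is the paper's system for $(N,\Pi,U)$), run higher-order energy estimates on the $\tilde u$-equation with $\partial_z\tilde P$ re-expressed through the first two equations, apply Gronwall, and then recover $\tilde\eta,\tilde P$ from $\tilde u$ --- the paper ends with a bound of the form $\delta\cdot C(T)\,e^{\bar C T/2}$ and simply calls that $K(\delta)$. You overcomplicate your ``main obstacle'': since $T>0$ is fixed in the statement, $K(\delta)$ is allowed to depend on $T$, so the naive Gronwall growth is acceptable and no block-by-block restart or contraction-over-one-period argument is required.
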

\begin{proof}
We start by setting 
\begin{align}\label{ar161}
N(t,z)&=\eta(t,z)-\bar{\eta}(t), \notag \\
U(t,z)&=u(t,z)-\bar{u}(t), \notag \\
\Pi(t,z)&=P(t,z)-\bar{P}(t),
\end{align}
which by standard computations satisfy the following system
\begin{eqnarray}\label{ar162}
\begin{cases}
\partial_t N(t,z)+U(t,z)=0, \\
\alpha\partial_{tt}N(t,z)+\tilde{k}\partial_t N(t,z)+\kappa N(t,z)-A\Pi(t,z)=0,  \\
\rho\partial_t U(t,z)+\rho U(t,z)\partial_z U(t,z)+\partial_z\Pi(t,z)+\beta U(t,z)=0.
\end{cases}
\end{eqnarray}
Moreover, the previous system is complemented with the following initial data
\begin{align}\label{ar163}
N(0,z)&=N_0(z)=g(z)+\frac{2\pi}{\omega}, \notag \\
U(0,z)&=U_0(z)=f(z), \notag \\
\Pi(0,z)&=\Pi_0(z)=s(z)-\frac{\tilde{k}\bar{\alpha}\omega-\bar{\alpha}\alpha\omega^2}{A}+\frac{2\pi\kappa}{A\omega}-\tilde{P},
\end{align}
and the following boundary conditions
\begin{align}\label{ar164}
N(t,0)&=g(0)+\frac{A}{\kappa}\tilde{P}+\frac{2\pi}{\omega}, \notag \\
U(t,0)&=0, \notag \\
\Pi(t,0)&=\frac{\kappa}{A}g(0)+\frac{2\pi\kappa}{A\omega}, \notag \\
N(t,L)&=g(L)+\frac{2\pi}{\omega}, \notag \\
U(t,L)&=0, \notag \\
\Pi(t,L)&=\frac{\kappa}{A}g(L)+\frac{2\pi\kappa}{A\omega}.
\end{align}
From now on for the sake of simplicity we will not denote explicitly the time and space dependence.\\
We observe that the assumptions \eqref{ar159} totally fullfills the condition \eqref{ar30}, which means that we can apply all the classical solutions properties in what follows.\\
In order to prove our statement we perform, as before, higher order energy estimates of $(N, \Pi, U)$. From now on, for any $\alpha<s-4$ we will use the following notation $N_{\alpha}=\frac{\partial^{\alpha}}{\partial z^{\alpha}}N$, $\Pi_{\alpha}=\frac{\partial^{\alpha}}{\partial z^{\alpha}}\Pi$ and $U_{\alpha}=\frac{\partial^{\alpha}}{\partial z^{\alpha}}U$. \\
We begin by taking the order $\alpha$ derivative of the third equation of \eqref{ar162}, that becomes
\begin{equation}\label{ar165}
\rho\partial_tU_{\alpha}+\rho U_{\alpha}\partial_zU+\rho U\partial_zU_{\alpha}+\beta U_{\alpha}+\partial_z\Pi_{\alpha}=0,
\end{equation}
in which we plug
\begin{equation}\label{ar166}
\partial_z\Pi_{\alpha}=-\frac{1}{A}\left(\alpha\partial_z\partial_tU_{\alpha}+\tilde{k}\partial_zU_{\alpha}-\kappa\partial_z^{\alpha+1}N_0+\kappa\int_0^t\partial_zU_{\alpha}\,ds\right), 
\end{equation}
obtained by combining the first two equations of \eqref{ar162}. \\
We define the energy as
\begin{equation}\label{ar167}
E^{\alpha}(t):=\frac{1}{2}\int_0^L \rho U_{\alpha}U_{\alpha} dz,
\end{equation}
and applying the time derivative to \eqref{ar167} we get the following estimate
\begin{align}\label{ar168}
\frac{d E^{\alpha}(t)}{dt}&=\rho\int_0^L \partial_t U_{\alpha}U_{\alpha}\,dz\notag \\
&=-\int_0^L \bigg[\rho U_{\alpha}\partial_z U+\rho U\partial_z U_{\alpha}+\beta U_{\alpha}+\frac{1}{A}\bigg(\alpha\partial_z\partial_tU_{\alpha}+\tilde{k}\partial_zU_{\alpha}\notag\\
&-\kappa\partial_z^{\alpha+1}N_0+\kappa\int_0^t\partial_zU_{\alpha}\,ds\bigg)\bigg]U_{\alpha}\,dz.
\end{align}
By estimating and summing up over $\alpha<s-4$ every term in \eqref{ar168}, we get the following estimate
\begin{align}\label{ar169}
\frac{d}{dt}\left\lVert U\right\rVert_{L^{\infty}_{t}\mathcal{H}^{s-4}_z}^{2}&\leq \bar{C}\left\lVert U\right\rVert_{L^{\infty}_{t}\mathcal{H}^{s-4}_z}+\frac{\lvert\kappa\rvert}{2A}\left\lVert N_{0}\right\rVert_{\mathcal{H}^{s-4}_z},
\end{align}
where
\begin{equation}\label{ar170} 
\bar{C}=\sup_{0\leq t\leq T} \left(\frac{3\rho}{2}\left\lVert\partial_z U\right\rVert_{L^{\infty}_{z}}+\lvert\beta\rvert+\frac{\lvert\alpha\rvert+\lvert\tilde{k}\rvert L+\lvert\kappa\rvert(1+A)}{A}\right).
\end{equation}
We apply the Gronwall lemma to \eqref{ar169}, and by using the assumption \eqref{ar159} we obtain 
\begin{align}\label{ar171}
\left\lVert U\right\rVert_{L^{\infty}_{t}\mathcal{H}^{s-4}_z}^{2}&\leq\left(\left\lVert U_{0}\right\rVert_{\mathcal{H}^{s-4}_z}^{2}+\int_{0}^{t}\frac{\lvert\kappa\rvert}{2A}\left\lVert N_{0}\right\rVert_{\mathcal{H}^{s-4}_z}^{2}e^{-\int_{0}^{s}\bar{C}dr}\right)e^{\bar{C}t}\notag\\
&\leq\delta^{2}\left(1+\frac{\lvert\kappa\rvert}{2A}\right)e^{\bar{C}t}.
 \end{align}
Taking into account that $Q_p$, the production rate \eqref{ar156}, is a totally bounded function with period $\bar{T}$, we can easily observe that \eqref{ar171} yields
\begin{align}\label{ar172}
\left\lVert N\right\rVert_{L^{\infty}_{t}\mathcal{H}^{s-4}_z}&\leq\left\lVert N_{0}\right\rVert_{\mathcal{H}^{s-4}_z}+T\left\lVert U\right\rVert_{L^{\infty}_{t}\mathcal{H}^{s-4}_z}\notag\\
&\leq \delta\left(1+T\sqrt{1+\frac{\lvert\kappa\rvert}{2A}}e^{\frac{\bar{C}}{2}t}\right)
\end{align}
and
\begin{align}\label{ar173}
\left\lVert \Pi\right\rVert_{L^{\infty}_{t}\mathcal{H}^{s-4}_z}&\leq \frac{\lvert\alpha\rvert}{A}\left\lVert \partial_{tt}N\right\rVert_{L^{\infty}_{t}\mathcal{H}^{s-4}_z}+\frac{\lvert\tilde{k}\rvert}{A}\left\lVert \partial_{t}N\right\rVert_{L^{\infty}_{t}\mathcal{H}^{s-4}_z}\notag\\
&+\frac{\lvert\kappa\rvert}{A}\left\lVert N\right\rVert_{L^{\infty}_{t}\mathcal{H}^{s-4}_z}+\lvert \tilde{P}\rvert\notag\\
&\leq\delta\left(1+\frac{\lvert\kappa\rvert}{A}+3\bar{\gamma}T\sqrt{1+\frac{\lvert\kappa\rvert}{2A}}\right)e^{\frac{\bar{C}}{2}t}+\lvert\tilde{P}\rvert,
\end{align}
where 
\begin{equation}
\bar{\gamma}=\max\left\{\lvert\kappa\rvert, \lvert\tilde{k}\rvert\frac{\bar{C}}{2}, \lvert\alpha\rvert\frac{\bar{C}^{2}}{4} \right\}.
\end{equation}
Hence, by using \eqref{ar171}, \eqref{ar172} and \eqref{ar173} we finally get
\begin{align}\label{ar174}
\sup_{0\leq t\leq T} \bigg(\left\lVert \eta(t,z)-\bar{\eta}(t)\right\rVert_{\mathcal{H}^{s-4}_z}&+\left\lVert P(t,z)-\bar{P}(t)\right\rVert_{\mathcal{H}^{s-4}_z}+\left\lVert u(t,z)-\bar{u}(t)\right\rVert_{\mathcal{H}^{s-4}_z}\bigg)\notag\\
&\leq \delta\sqrt{1+\frac{\lvert\kappa\rvert}{2A}}e^{\frac{\bar{C}}{2}t}\left(5+T+3\bar{\gamma}T\right)\notag\\
&\leq K(\delta),
\end{align}
which concludes the proof.

\end{proof}

\section{Numerical simulations}

This section is devoted to the validation of the main results obtained in this paper by performing numerical simulations. The computational parameters are based on magnetic resonance measurements. At locations where measured data are not available, lengths and diameters are estimated by combining literature data with measured and computed flows. All the physical constants that appears in our cerebrospinal fluid model are shown in Table \ref{tab1} but, first of all, we have to remember that, for the sake of simplicity, we collected them in the following way:
\begin{align}
\tilde{Q_p}&=\displaystyle{\frac{Q_p}{A}}, \notag \\
\alpha&=\rho A\delta,\notag \\
\tilde{k}&=k_d, \notag \\
\kappa&=k_e, \notag\\
\beta&=\frac{8\mu}{r^2}.
\end{align}

\begin{table}[h]
\centering
\label{tab1}
\begin{tabular}{|c|c|}
\hline
Property&Value\\ 
\hline
Fluid density, $\rho$&$1004-1007$ kg/m$^3$\\ 
\hline
Tissue width, $\delta$&$\sim 5\times 10^{-4}$ m\\
\hline
 Fluid viscosity, $\mu$&$10^{-3}$ Pa s\\ 
 \hline
 Spring elasticity, $k_e$&$8$ N/m\\ 
 \hline
 Brain dampening, $k_d$&$0,35\times 10^{-3}$ (N s)/m\\ 
 \hline
\end{tabular}
 \caption{Tissue and fluid properties.}
\end{table}

Moreover we know that the choroid plexus produces CSF at a rate, $Q_p$, of $0,32$ cm$^3$/min \cite{biblio1} and from medical literature \cite{biblio2}, \cite{biblio1} and published data about ventricular pulsation \cite{biblio3}, \cite{biblio4} we deduce that the amplitude of choroid expansion, $a(t)$, is in the range of $1,29-1,55$ cm$^3$/min at each cardiac cycle.\\
In the inputs list for the simulations an important role is also played by the brain tissue pressure, $\tilde{P}$, that is assumed to be at venous pressure levels ($\sim 10$ mmHg).\\
As we anticipated in section \ref{paragr1}, in this model we assume that the cross sectional area, $A$, is affected only negligibly by the pressure variation and represents for us a constant that we choose in the range of $\sim 3-4$ mm$^2$ according to the clinical data and the compartment section.

Since the problem has an axisymmetric structure, in the numerical approach the model geometry is created in two dimensions. A first order upwind scheme with centered differences is employed for the spatial discretization of the governing equations and for the temporal discretization we adopt a forward Euler scheme with a time step size of $5\times 10^{-3}$.
\\
The boundary conditions involved in numerical simulations are selected by taking into account the conditions \eqref{ar27} stated in Theorem \ref{Teorema}.\\
For the initial data we proceed in the following way: the pressure at the initial time, $P_0(z)$, is implemented exactly as required by the conditions \eqref{ar26} and \eqref{ar26b}, while, for the velocity flux, $u$, and the tissue displacement, $\eta$, we choose 
\begin{equation}\label{numer1}
u_0(z)=4\sin(\pi z)
\end{equation}
which satisfy the condition condition \eqref{ar30}.
\vspace{10pt}

\begin{figure}[h]
\begin{minipage}{0.5\textwidth}
\includegraphics[width=\textwidth]{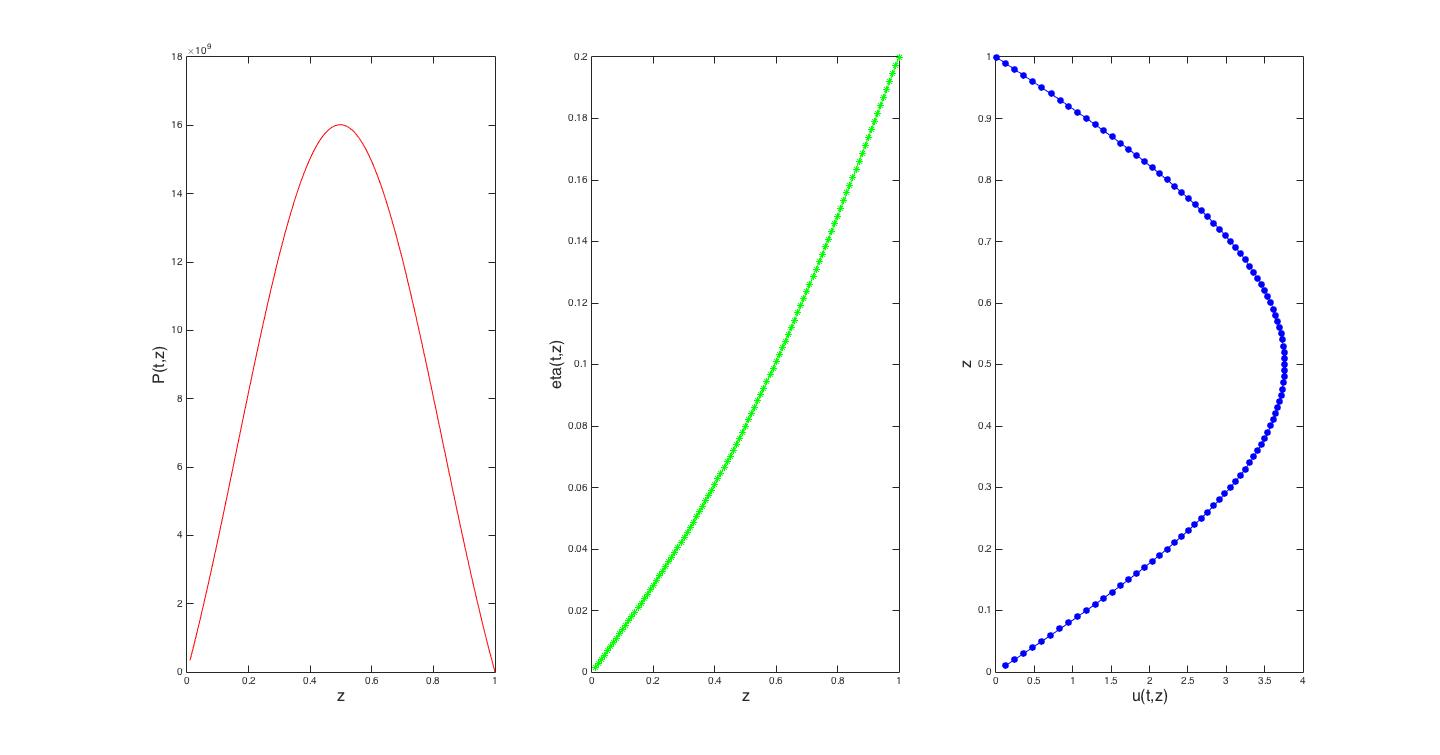}
\end{minipage}
\begin{minipage}{0.5\textwidth}
\includegraphics[width=\textwidth]{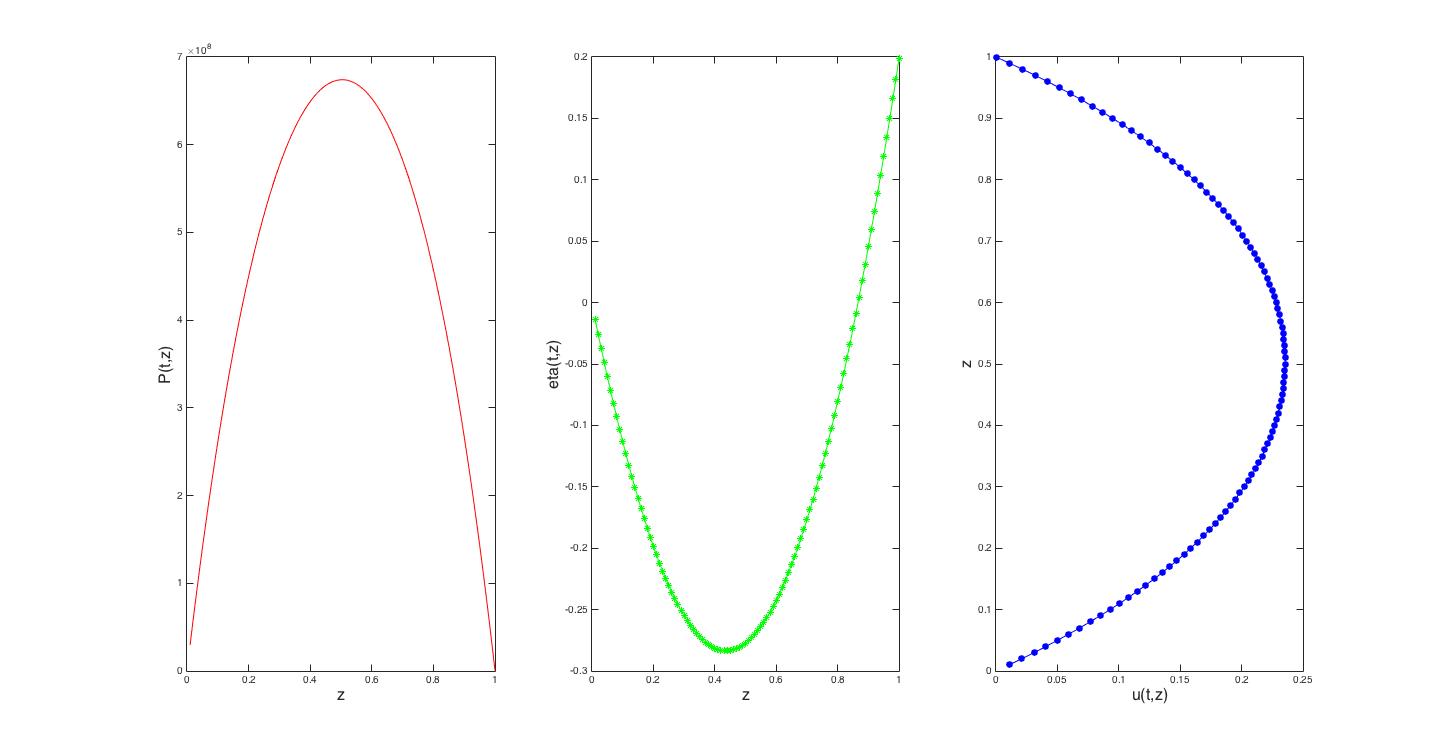}
\end{minipage}
\end{figure}
\begin{figure}[h]
\centering
\includegraphics[scale=0.125]{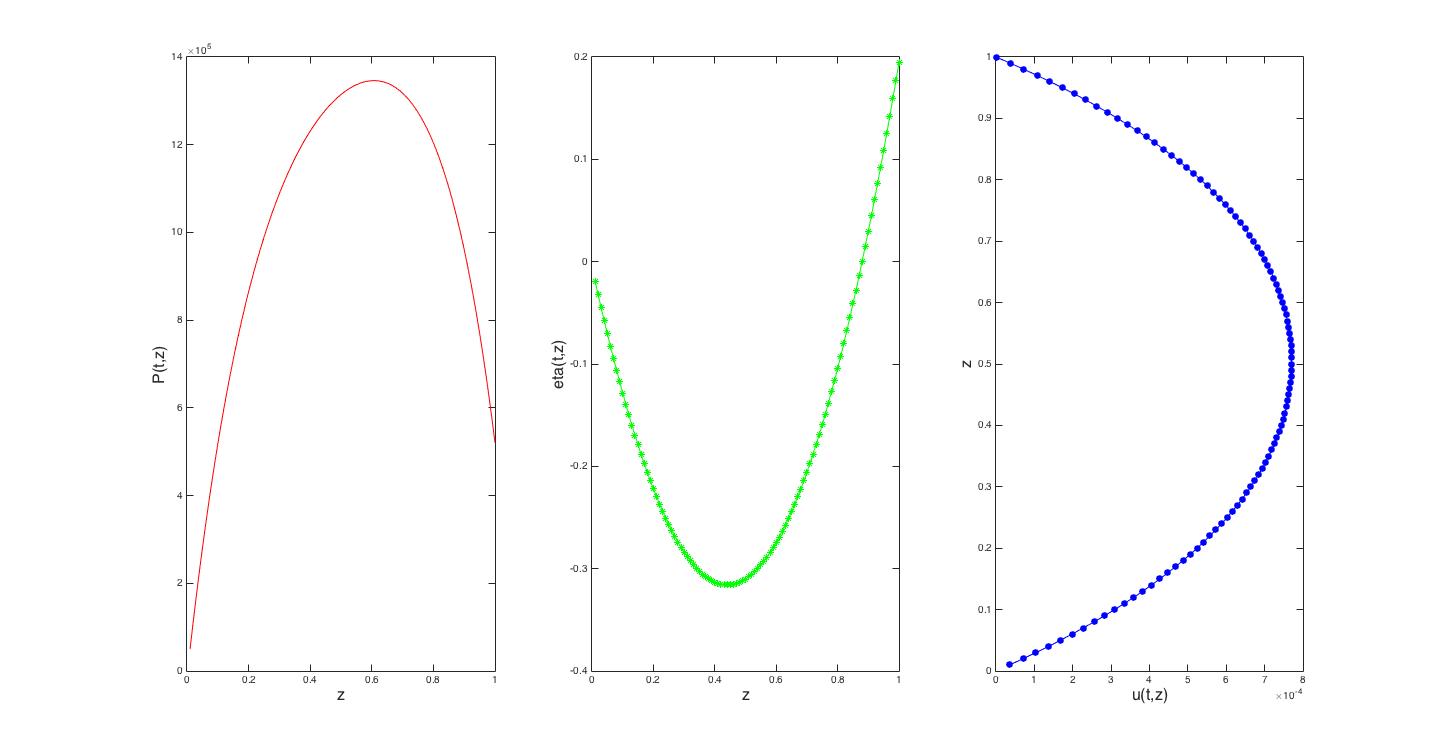}
\caption{The evolution of $P, \eta$ and $u$ with initial conditions \eqref{numer1}. }\label{fevol}
\end{figure}

In Figure \ref{fevol} we can observe in a section of length $L=1$ the profiles of the pressure (red), the tissue displacement (green) and the velocity flux (blue) at three different instants up to the final time $t=1$. 
\begin{remark}
Because of the no-slip condition, the fluid particles in the layer in contact with the surface of the compartment come to a complete stop. This layer also causes the fluid particles in the adjacent layers to slow down gradually as a result of friction. In general, to make up for this velocity reduction, the velocity of the fluid at the core of the compartment has to increase to keep the mass flow rate through the compartment constant. As a result, a velocity gradient should develop along the compartment. In our simulations we can observe, instead, a reduction in the velocity gradient at the midsection of the compartment modelized as a pipe. 
This is due to the fact that we are neglecting the compartment cross section variation which, otherwise, would concur to smooth the velocity and pressure profiles and to increase the velocity gradient without loss of energy along the compartment.
\end{remark} 

Now we want to show what happens when the global existence condition of the solution for the system \eqref{ar24} in Theorem \ref{Teorema2} are violated. In order to do that we fix the following initial data
\begin{equation}\label{numer2}
u_0(z)=-\exp(z)
\end{equation}
and for the pressure we keep the initial condition given by \eqref{ar26} and \eqref{ar26b} in Theorem \ref{Teorema} because it automatically inherits the properties of the other initial data.
\vspace{10pt}
\begin{figure}[h]
\centering
\includegraphics[scale=0.25]{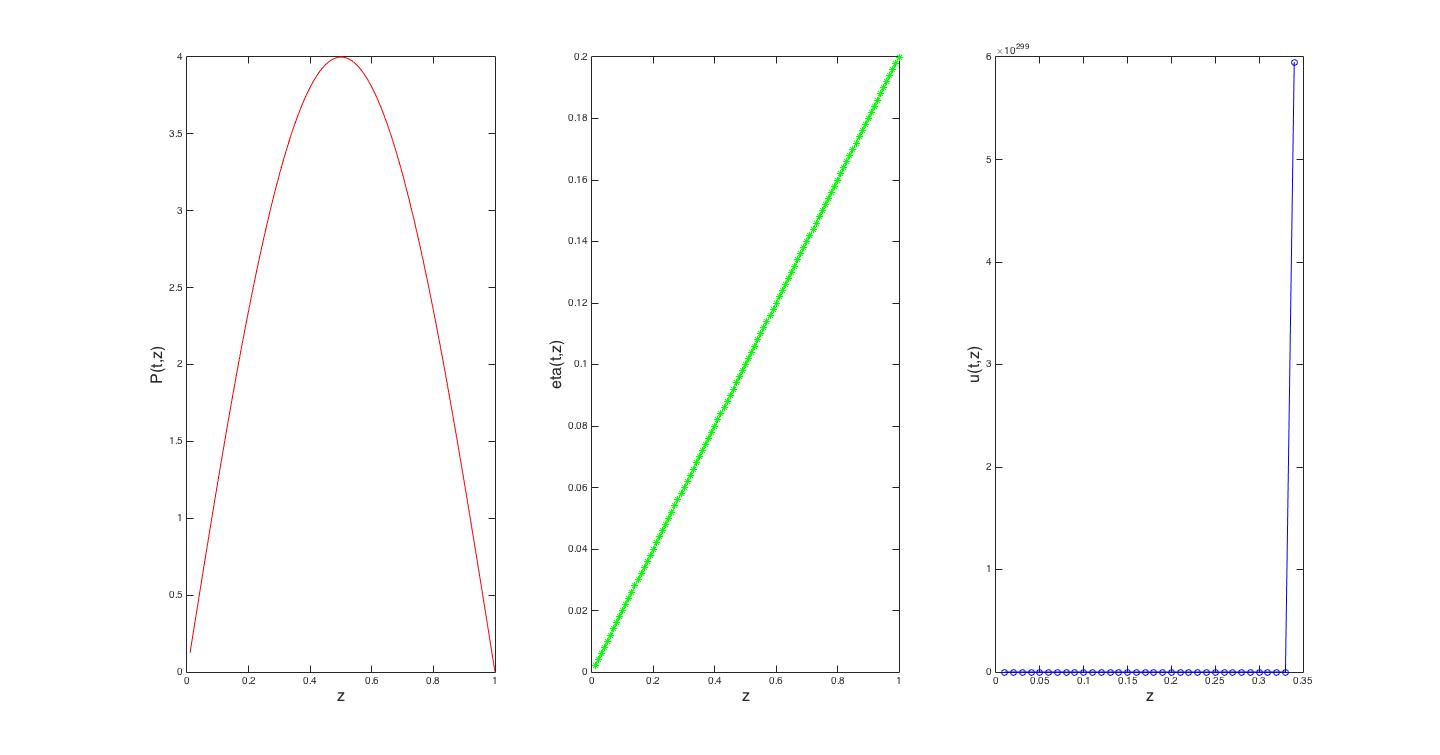}
\caption{The blow up of the velocity which occurs by violating global existence conditions. }\label{fevol1}
\end{figure}

The Figure \ref{fevol1} shows a velocity shock wave after a single iteration and consequently a blow up in pressure and displacement at the second iteration. This result is exactly what we expected according to the previous detailed analysis of the model.

Finally we can conclude that the simulations are in good agreement with the proof of a global solution for the system \eqref{ar24}, as well as with the real cerebral phenomena modelized.

\end{document}